\newcommand{\scm}{\mathcal{M}}
\newcommand{\bc}{\mathbb C}
\newcommand{\bz}{\mathbb Z}
\newcommand{\bq}{\mathbb Q}
\newcommand{\br}{\mathbb R}
\newcommand{\la}{\langle}
\newcommand{\ra}{\rangle}
\newcommand{\bs}{\backslash}
\newcommand{\al}{\alpha}
\newcommand{\be}{\beta}
\newcommand{\lam}{\lambda}
\DeclareMathOperator{\GL}{GL}
\DeclareMathOperator{\BZL}{BZL}
\newcommand{\fg}{\mathfrak g}
\newcommand{\fb}{\mathfrak b}
\newcommand{\fsl}{\mathfrak{sl}}
\newcommand{\fso}{\mathfrak{so}}
\newcommand{\sco}{\mathcal{O}}
\newcommand{\scb}{\mathcal B}
\newtheorem{Thm}{Theorem}[section]
\newtheorem{Prop}[Thm]{Proposition}
\newtheorem{Lem}[Thm]{Lemma}
\newtheorem{Cor}[Thm]{Corollary}
\theoremstyle{definition}
\newtheorem{Def}[Thm]{Definition}
\theoremstyle{remark}
\newtheorem{Rem}[Thm]{Remark}
\newtheorem{Ex}[Thm]{Example}
\theoremstyle{definition}
\theoremstyle{definition}
\newtheorem*{SA}{Stability Assumption}
\title{On the crystal graph description of the stable Weyl group multiple Dirichlet series}
\author{Yuanqing Cai}
\address{Department of Mathematics, Kyoto University, Kitashirakawa Oiwake-cho, Sakyo-ku, Kyoto 606-8502, Japan}
\email{cai@math.kyoto-u.ac.jp}
\subjclass[2010]{Primary 11F68; Secondary  05E15, 11M41, 20F55}
\keywords{Weyl group multiple Dirichlet series, BZL pattern, crystal graph, braidless weight, minimal representative}
\date\today
\begin{document}

\begin{abstract}
For a semisimple Lie algebra admitting a good enumeration, we prove a parametrization for the elements in its Weyl group.
As an application, we give a coordinate-free comparison between the crystal graph description (when it is known) and the Lie-theoretic description of the  Weyl group multiple Dirichlet series in the stable range.
\end{abstract}

\maketitle


\section{Introduction}

The purpose of this paper is to give a comparison between two descriptions of the \textit{Weyl group multiple Dirichlet series}.  Weyl group multiple Dirichlet series (associated to a root system $\Phi$ and a positive integer $n$) are Dirichlet series in $r$ complex variables which initially converge on a cone in $\bc^r$, possess analytic continuation to a meromorphic function on the whole complex space, and satisfy functional equations whose action on $\bc^r$ is isomorphic to the Weyl group of $\Phi$. They arise as Whittaker coefficients of Eisenstein series on covers of reductive groups and have applications in analytic number theory.

We begin by describing the shape of the Weyl group multiple Dirichlet series. Given a number field $F$ containing the $2n$-th roots of unity and a finite set of places $S$ of $F$ (chosen with certain restrictions), let $\sco_S$ denote the ring of $S$-integers in $F$ and $\sco_S^\times$ the units in this ring. Then to any $r$-tuple of nonzero $\sco_S$ integers $\boldsymbol{m}=(m_1,\cdots,m_r)$, we associate a Weyl group multiple Dirichlet series in $r$ complex variables $\boldsymbol{s}=(s_1,\cdots,s_r)$ of the form
\begin{equation}\label{eq:def of wgmds}
Z_\Psi(s_1,\cdots,s_r;m_1,\cdots,m_r)
=Z_\Psi(\boldsymbol{s}; \boldsymbol{m})
=\sum_{{\bf c}=(c_1,\cdots,c_r)\in(\sco_S/\sco_S^\times)^r}\dfrac{H^{(n)}(\boldsymbol{c};\boldsymbol{m})\Psi(\boldsymbol{c})}{|c_1|^{2s_1}\cdots |c_r|^{2s_r}},
\end{equation}
where the coefficients $H^{(n)}(\boldsymbol{c};\boldsymbol{m})$ carry the main arithmetic content. The function $\Psi(\boldsymbol{c})$ guarantees the numerator of our series is well-defined up to $\sco_S^\times$ units. Finally $|c_i|=|c_i|_S$ denotes the norm of the integer $c_i$ as a product of local norms on $F_S=\prod_{\nu\in S}F_\nu$.

The coefficients $H^{(n)}(\boldsymbol{c};\boldsymbol{m})$ are not multiplicative, but nearly so and can  be reconstructed from coefficients of the form
\[
H^{(n)}(p^{\boldsymbol{k}};p^{\boldsymbol{l}}):=H^{(n)}(p^{k_1},\cdots, p^{k_r};p^{l_1},\cdots, p^{l_r}),
\]
where $p$ is a fixed prime in $\sco_S$, $\boldsymbol{k}=(k_1,\cdots,k_r)$, $\boldsymbol{l}=(l_1,\cdots,l_r)$, $k_i=\mathrm{ord}_p(c_i)$ and $l_i=\mathrm{ord}_p(m_i)$.

There are at least five approaches to defining these prime-power contributions so that $Z_\Psi(\boldsymbol{s}; \boldsymbol{m})$ admits analytic continuation to $\bc^r$ and satisfies the desired functional equations.

\begin{enumerate}
\item\label{case:stable} When $n$ is sufficiently large (see Section \ref{sec:stable case} below),  the $p$-parts admit a simple Lie-theoretic definition.    This is done in \cite{BBF-untwisted-stable, BBF-twisted}. For a fixed $\boldsymbol{l}$, the $\boldsymbol{k}$'s such that $H^{(n)}(p^{\boldsymbol{k}};p^{\boldsymbol{l}})\neq 0$ have a bijection with the Weyl group $W(\Phi)$.

\item\label{case:cg} Chinta-Gunnells \cite{CG-constructing} use a remarkable action of the Weyl group to define the coefficients $H^{(n)}(p^{\boldsymbol{k}};p^{\boldsymbol{l}})$ as an average over elements of the Weyl group for any root system $\Phi$ and any integer $n\geq 1$, from which functional equations and analytic continuation of the series $Z$ follows.

\item\label{case:crystal} For $\Phi$ of type $A$ and any $n\geq 1$,  the prime-power coefficients are defined as a sum over basis vectors in a highest weight representation for $\GL(r+1,\bc)$ associated to the fixed $r$-tuple $\boldsymbol{l}$ (\cite{BBF-combinatorial, BBF-crystal}). By choosing a   nice decomposition of the longest element, this can also be described using a combinatorial model for highest weight representations -- the Berenstein-Zelevinsky-Littelmann patterns. The same is carried out for type $C$ and odd $n$  in  \cite{FZ-orthogonal} (an inductive formula is also established for even degree covers).  A conjecture of this form for type $D$ is stated in  \cite{CG-littelmann}.
\item The prime-power coefficients can also be interpreted as values of Whittaker functions on metaplectic groups; see \cite{mcnamara,McNamara16}.
\item The prime-power coefficients can also be defined via ice models and have interesting connections to the Yang-Baxter equation (\cite{BBF-ice}).

\end{enumerate}

It is widely believed that these definitions agree. In this paper we focus on the first three.  The equivalence between (\ref{case:stable})  and (\ref{case:cg})  is shown in \cite{Fri}, generalizing the approach in \cite{CFG}. In type A, the equivalence between (\ref{case:cg}) and (\ref{case:crystal}) follows from \cite{mcnamara} and \cite{CO-metaplectic}, by interpreting both descriptions as values of unramified Whittaker functions. The equivalence between (\ref{case:stable}) and (\ref{case:crystal})  can be found in \cite{BBF-twisted} Section 8 for type A and \cite{BBF-typec} Section 4 for type C. In both cases, the proofs rely on explicit realization of the root systems in $\br^r$.

In \cite{BBF-typec}, the authors ask if a coordinate-free proof (i.e. without explicit realization of the root systems in $\br^r$) for the equivalence between (\ref{case:stable}) and (\ref{case:crystal}) exists. This is the question we would like to address.

Assume that $n$ is sufficiently large. Our comparison is based on the following observations.
\begin{itemize}
\item The $\BZL$-patterns having nontrivial contribution in the crystal graph description correspond to the orbit of the highest weight vector under the Weyl group.  This provides a natural candidate for such a bijection.
    \item In the crystal graph description, one first decorates each pattern, and the contribution is expressed in terms of $n$-th order Gauss sums according to the decoration. One easily sees that for a pattern to have nontrivial contribution, it must be a ``stable pattern'' (see Definition \ref{def:stable and unstable}). We show that decorations for stable patterns must have much nicer shapes (see Lemma \ref{lem:unstable decoration}).
\end{itemize}

 Our goal is to establish a bijection between the set of possible decorations and the Weyl group directly. By doing this, we can give an explicit bijection between the set of stable patterns and the Weyl group (see Proposition \ref{prop:bijection}), without passing to the orbit of the highest weight vector. The comparison between the crystal graph description and the Lie-theoretic description of the Weyl group multiple Dirichlet series is very natural in this setting.

In the process of confirming this bijection, we discover that the nice decomposition of the longest element plays an important role. A bijection between a set of decorations and the Weyl group is also established for root systems admitting \textit{good enumerations} (see Section \ref{sec:good enumeration} below), not just for root systems of type $A$ and $C$.

The rest of the paper is organized as follows. In Section \ref{sec:braidless and minimal}, we describe the parametrization of the Weyl group for certain root systems as a set of decorations. Our main result in this section is Proposition \ref{prop:main result in weyl}. This section is also of independent interest. The cases required for applications in Section \ref{sec:WGMDS} are highlighted in Section \ref{sec:examples} and the rest is given in Appendix \ref{app:examples}. The study of $\BZL$-patterns is carried out in Section \ref{sec:BZL patterns}.   We recall the definition and properties of $\BZL$-patterns. We define stable and unstable patterns, and prove various properties. In particular, we give a natural bijection between the Weyl group and the set of stable patterns.  Section \ref{sec:WGMDS} is devoted to Weyl group multiple Dirichlet series. We first review  the Lie-theoretic description in the stable range and the known cases of the crystal graph description.  Our main result, i.e. the comparison between these two descriptions in the stable range, is done in Section \ref{sec:comparison}.  The comparison can be done term by term canonically.

Some of the results in this paper are only stated for type $A$ and $C$. We try to minimize the use of case-by-case check in the proofs as some of them actually work in a more general setting. Our results also suggest what the crystal graph description should look like for general root systems, at least for stable patterns (see Remark \ref{rem:general description}).

\subsection*{Acknowledgement}
The author would like to thank the referee for very detailed and helpful comments for an earlier version of the paper, which correct several mistakes and improve the presentation of the paper.
Part of this work was carried out when the author was a postdoctoral fellow at the Weizmann Institute of Science. The author would like to thank the Institute for providing an excellent working environment. This research was supported by the ERC, StG grant number 637912 and JSPS KAKENHI, grant number 19F19019.

\section{Notations and Preliminaries}

In this section, we set up notations for use in the sequel.

\subsection{Root system}\label{sec:root systems}

Let $\Phi$ be a reduced root system, $\Phi^+$ and $\Phi^-$ a choice of positive and negative roots respectively, and $\Delta=\{\al_1,\cdots,\al_r\}$ the set of simple roots. We denote by $\Phi^\vee$ the set of coroots and $\al\mapsto \al^\vee$ the bijection between $\Phi$ and $\Phi^\vee$. We use
\[
\la \cdot,\cdot \ra:\Phi\times\Phi^\vee\to \bz
\]
to denote the canonical pairing between $\Phi$ and $\Phi^\vee$.

Let $\omega_1,\cdots,\omega_r$ be the fundamental dominant weights, which satisfy
\[
\la \omega_i,\al_j^\vee\ra=\delta_{ij} \ (\delta_{ij}= \text{Kronecker delta}).
\]
Let $\Lambda$ be the weight lattice, generated by the $\omega_i$.
Let
\[
\rho=\sum_{i=1}^r \omega_i=\dfrac{1}{2}\sum_{\al\in\Phi^+}\al.
\]
Given a weight $\lam$, define
\begin{equation}\label{eq:d lambda}
d_\lam(\al)=\la \lam+\rho,\al^\vee\ra.
\end{equation}

Let $W=W(\Phi)$ be the Weyl group of the root system $\Phi$. It is generated by the simple reflections $s_{\al_i}=s_i$ for $i=1,\cdots, r$. (We also use $s_i$ as complex variables later. But this will not cause any confusion.) The action of $s_i$ on $\Lambda$ is given by
\[
\lam\mapsto s_i(\lam):= \lam-\la \lam,\al_i^\vee\ra\al_i.
\]

Let $\ell(\cdot)$ denote the length function on $W$. Let $w_0$ denote the element of the longest length in $W$ and denote that length by $N$. For $w\in W$, define $\Phi(w)=\{\al\in\Phi^+:w(\al)\in\Phi^-\}$.

We also need the following results.

\begin{Lem}[\cite{bump-lie} Proposition 20.10] \label{lem:bump lemma}
If $w=s_{i_1}\cdots s_{i_N}$ is a reduced decomposition of $w$, then
\[
\Phi(w)=\{\al_{i_N}, \ s_{i_N}(\al_{i_{N-1}}), \ \cdots, \  s_{i_N}s_{i_{N-1}}\cdots s_{i_2}(\al_{i_1})\}.
\]
\end{Lem}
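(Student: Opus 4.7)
The plan is to proceed by induction on $N=\ell(w)$, using the standard one-step formula describing how $\Phi(\cdot)$ changes under right multiplication by a simple reflection. The base case $N=1$ is immediate: if $w=s_{i_1}$, then $w$ negates $\alpha_{i_1}$ and permutes the remaining positive roots among themselves, so $\Phi(s_{i_1})=\{\alpha_{i_1}\}$, which matches the claimed list.

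The key auxiliary fact I would establish first is:
\[
\ell(ws_i)=\ell(w)+1 \ \Longrightarrow\ \Phi(ws_i)=s_i(\Phi(w))\cup\{\alpha_i\}.
\]
Two standard ingredients go into this: (i) $s_i$ bijectively permutes $\Phi^+\setminus\{\alpha_i\}$ and sends $\alpha_i$ to $-\alpha_i$; and (ii) $\ell(ws_i)>\ell(w)$ is equivalent to $w(\alpha_i)\in\Phi^+$, i.e.\ $\alpha_i\notin\Phi(w)$. Given a positive root $\alpha$, write $\beta=s_i(\alpha)$ and ask whether $ws_i(\alpha)=w(\beta)$ is negative. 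A short case split on $\alpha=\alpha_i$ versus $\alpha\neq \alpha_i$, combined with fact (i) to guarantee $\beta>0$ in the second case and fact (ii) to handle the first, yields the formula, with $\alpha_i$ entering precisely because the length grows.

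With this in hand, write $w=v\cdot s_{i_N}$ where $v=s_{i_1}\cdots s_{i_{N-1}}$ is a reduced expression of length $N-1$. By the induction hypothesis,
\[
\Phi(v)=\{\alpha_{i_{N-1}},\ s_{i_{N-1}}(\alpha_{i_{N-2}}),\ \ldots,\ s_{i_{N-1}}\cdots s_{i_2}(\alpha_{i_1})\}.
\]
Applying the auxiliary identity to $v$ and $s_{i_N}$ conjugates each of these vectors by $s_{i_N}$ on the left and adjoins $\alpha_{i_N}$, producing exactly the list in the statement.

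The main ``obstacle'' is purely notational: one must keep track of the fact that the list begins with $\alpha_{i_N}$ (the newly adjoined simple root), rather than with $\alpha_{i_1}$ as in the more familiar left-multiplication version of this formula. Distinctness of the $N$ listed roots is automatic, since iterating the auxiliary identity also shows $|\Phi(w)|=\ell(w)=N$.
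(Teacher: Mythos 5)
The paper does not prove this lemma; it cites it directly from Bump's \emph{Lie groups}, Proposition 20.10, and your induction on $\ell(w)$ via the one-step identity $\Phi(ws_i)=s_i(\Phi(w))\cup\{\alpha_i\}$ (valid when $\ell(ws_i)=\ell(w)+1$) is precisely the standard argument given there. Your proof is correct, including the verification of the auxiliary identity and the cardinality remark, so nothing further is needed.
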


\begin{Lem}\label{lem:springer lemma}
For any $\al,\beta\in\Phi$,
\[
(s_\al(\be))^\vee=s_{\al^\vee}(\beta^\vee).
\]
\end{Lem}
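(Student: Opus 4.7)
The plan is to reduce the identity to a computation with a fixed $W$-invariant inner product. First, I would fix a Weyl-invariant positive-definite inner product $(\cdot,\cdot)$ on $V:=\Lambda\otimes_{\bz}\br$, so that the coroot is realized as $\gamma^\vee=2\gamma/(\gamma,\gamma)$ for every root $\gamma\in\Phi$, and the canonical pairing satisfies $\la\beta,\al^\vee\ra=2(\beta,\al)/(\al,\al)$. In this identification, roots and coroots both sit inside $V$, and the action of $s_\al$ on $\Lambda$ introduced above extends to the orthogonal reflection on $V$ across the hyperplane perpendicular to $\al$.

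The key observation is that, under this identification, the reflection associated with $\al^\vee$ in the dual root system $\Phi^\vee$ coincides with $s_\al$ as an orthogonal transformation of $V$, since $\al^\vee$ is a positive scalar multiple of $\al$ and hence spans the same line. Thus it suffices to check the single identity
\[
(s_\al(\be))^\vee = s_\al(\be^\vee)
\]
in $V$.

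For this, I would use that $s_\al$ is an isometry: $(s_\al(\be),s_\al(\be))=(\be,\be)$. Combined with linearity of $s_\al$, one computes
\[
(s_\al(\be))^\vee
= \frac{2\,s_\al(\be)}{(s_\al(\be),s_\al(\be))}
= \frac{2\,s_\al(\be)}{(\be,\be)}
= s_\al\!\left(\frac{2\be}{(\be,\be)}\right)
= s_\al(\be^\vee).
\]
Re-expressing the right-hand side through the canonical pairing, $s_\al(\be^\vee)=\be^\vee-\la\al,\be^\vee\ra\al^\vee = s_{\al^\vee}(\be^\vee)$, which is precisely the desired identity.

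I do not expect a real obstacle: the only substantive point is the bookkeeping in step two, namely justifying that the formula $s_{\al^\vee}(\gamma^\vee)=\gamma^\vee-\la\al,\gamma^\vee\ra\al^\vee$ for the reflection in the dual root system agrees with the action of $s_\al$ on $V$ once we identify $V$ and $V^*$ via the Weyl-invariant inner product. After that, the calculation is a one-line consequence of the fact that $s_\al$ is an isometry.
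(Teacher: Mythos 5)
Your argument is correct. Note, however, that the paper does not actually prove this lemma: it simply cites the last two lines of page 4 of Springer's Corvallis article, where the identity is established axiomatically for root data, working directly with the canonical pairing and the adjoint of $s_\al$ rather than with any inner product. Your route is the other standard one: average to get a $W$-invariant inner product, identify $V$ with $V^*$ so that $\gamma^\vee=2\gamma/(\gamma,\gamma)$, observe that $s_{\al^\vee}$ and $s_\al$ become the same orthogonal reflection because $\al^\vee$ and $\al$ span the same line, and then deduce $(s_\al(\be))^\vee=s_\al(\be^\vee)$ from the fact that $s_\al$ is a linear isometry. The computation checks out, including the final translation $s_\al(\be^\vee)=\be^\vee-\la\al,\be^\vee\ra\al^\vee=s_{\al^\vee}(\be^\vee)$. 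What Springer's argument buys is generality (it applies to arbitrary root data, with no recourse to an inner product or to $\br$-coefficients); what yours buys is a short, self-contained verification, which is entirely adequate in the present setting since $\Phi$ is a reduced root system in a Euclidean space.
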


\begin{proof}
This is proved in the last two lines of \cite{springer-1979} Page 4.
\end{proof}

When $\al=\al_i$ is a simple root, we write $s_i(\be^\vee)=s_{\al_i^\vee}(\be^\vee)$. The meaning of $s_i$ (either $s_{\al_i}$ or $s_{\al_i^\vee}$) is always clear in the context.

\subsection{Hilbert symbols}

Let $n>1$ be an integer and let $F$ be a number field containing the $n$-th roots of unity. Let $S$ be a finite set of places of $F$ such that $S$ contains all Archimedean places, all places ramified over $\bq$, and is sufficiently large that the ring of $S$-integers $\sco_S$ is a principal ideal domain. Embed $\sco_S$ in $F_S=\prod_{\nu\in S}F_\nu$ diagonally.

The product of local Hilbert symbols gives rise to a pairing
\[
(\ , \ )_S: F_S^\times \times F_S^\times  \to \mu_n, \qquad (a,b)_S=\prod_{\nu\in S}(a,b)_v.
\]
Here, we take the symbol to be the inverse of the symbol defined in \cite{Neukirch}; see also \cite{brubaker-bump-kubota} Section 2.
A subgroup $\Omega$ of $F_S^\times$ is called \textit{isotropic} if $(a,b)_S=1$ for all $a,b\in\Omega$.

\subsection{Gauss sum}
If $a\in\sco_S$ and $\fb$ is an ideal of $\sco_S$, let $(\frac{a}{\fb})$ be the $n$th order power residue symbol as defined in \cite{brubaker-bump-kubota}. (This depends on $S$, but we suppress this dependence from the notation.) Let $\psi$ be a nontrivial additive character $\psi$ of $F_S$ such that $\psi(x\sco_S)=1$ if and only if $x\in\sco_S$. If $a,c\in\sco_S$ and $c\neq 0$, and if $t$ is a positive integer, the Gauss sum is defined by
\[
g_t(a,c)=\sum_{d \mod c}
\left(\dfrac{d}{c\sco_S}\right)^t
\psi\left(\dfrac{ad}{c}\right).
\]

\section{Braidless Weights and Minimal Representatives}\label{sec:braidless and minimal}

In this section, we describe a parametrization of elements in the Weyl group for certain root systems. The only cases that appear in the application are given in Section \ref{sec:examples}. The reader can safely skip the materials before Section \ref{sec:examples}.

\subsection{Braidless weights}
Recall that a fundamental weight $\omega$ is \textit{braidless} if for any $\tau \in W$ the following holds (\cite{Littelmann98} Section 3):
\begin{equation}\label{eq:def of braidless}
\text{if }\al,\gamma\in\Delta \text{ are such that } \la \tau(\omega),\al^\vee \ra >0, \la \tau(\omega),\gamma^\vee \ra >0, \text{ then }\la \gamma,\al^\vee \ra =0.
\end{equation}
We call a simple root $\al$ braidless if the corresponding fundamental weight $\omega_\al$ is so.

The set of braidless weights is determined by \cite{Littelmann98} Lemma 3.1. Here we use the enumeration of fundamental weights as in \cite{Bour}.
\begin{Lem}[\cite{Littelmann98} Lemma 3.1]
A fundamental weight $\omega$ of a simple Lie algebra $\fg$ is braidless if and only if $\omega$ is a minuscule weight of $\fg$, or $\omega=\omega_1$ for $\fg$ of type $B_n$, or $\omega=\omega_n$ for $\fg$ of type $C_n$, or $\omega$ is arbitrary and $\fg$ is of rank at most two.
\end{Lem}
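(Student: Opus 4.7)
The plan is to establish sufficiency and necessity separately, handling in turn the three sufficient classes (minuscule weights, the two distinguished $B_n, C_n$ cases, and rank $\le 2$) and then ruling out every remaining fundamental weight by explicit counterexample. In the minuscule case I would use the defining property that every weight $\mu$ of the highest-weight representation $V(\omega)$ satisfies $|\la \mu, \al^\vee\ra| \le 1$ for every root $\al$. If $\al, \ga \in \Delta$ satisfy $\la \tau(\omega), \al^\vee\ra > 0$ and $\la \tau(\omega), \ga^\vee\ra > 0$, then both values equal $1$, and applying $s_\ga$ yields
\[
\la s_\ga \tau(\omega), \al^\vee\ra = 1 - \la \ga, \al^\vee\ra.
\]
Since $s_\ga \tau(\omega)$ is still a weight of $V(\omega)$, the minuscule bound forces $\la \ga, \al^\vee\ra \ge 0$, which combined with the standard inequality $\la \ga, \al^\vee\ra \le 0$ for distinct simple roots yields equality.

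For the distinguished $B_n$ and $C_n$ cases, the Weyl orbits admit transparent coordinate descriptions: the orbit of $\omega_1$ for $B_n$ is $\{\pm e_i\}$, while that of $\omega_n$ for $C_n$ is $\{\sum_i \epsilon_i e_i : \epsilon_i = \pm 1\}$. A direct tabulation of pairings with the simple coroots confirms the property in both cases; for $C_n$, any adjacent pair of positive pairings would force contradictory sign constraints on the $\epsilon_k$. For rank $\le 2$, I would observe that a fundamental weight $\omega_i$ lies on the wall $\la \cdot, \al_j^\vee\ra = 0$ for some $j \ne i$, hence not in the open dominant chamber; since $W$ acts simply transitively on Weyl chambers, no Weyl translate $\tau(\omega_i)$ lies in the open dominant chamber either, and in rank $2$ this means at most one pairing with the two simple coroots is positive, making the condition vacuous.

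For necessity, for each pair $(\fg, \omega_j)$ with $\fg$ simple of rank $\ge 3$ and $\omega_j$ neither minuscule nor one of the two distinguished cases, I would exhibit a weight $\mu \in W\cdot\omega_j$ with positive pairings against two simple coroots of adjacent simple roots. For types $B_n$ and $C_n$ with $2 \le j \le n-1$, the weight $\mu = e_1 - e_3 + e_4 + \dots + e_{j+1}$ lies in the orbit and satisfies $\la \mu, \al_1^\vee\ra = \la \mu, \al_2^\vee\ra = 1$. For $D_n$, the branching at $\al_n$ yields candidates like $\mu = e_{n-2} + e_n$ (for $j=2$), which pair positively with $\al_{n-2}^\vee$ and $\al_n^\vee$; analogous constructions handle larger $j$. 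The exceptional types $E_6, E_7, E_8, F_4$ would be handled by similar explicit computation in their standard coordinate realizations. The main obstacle will be this case-by-case verification for the exceptional types, where orbits are large and explicit realizations are cumbersome; a truly uniform argument would require a general principle linking non-minusculeness to the existence of an orbit element sitting in the interior of a wedge bounded by two adjacent positive coroot hyperplanes.
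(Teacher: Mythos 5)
The paper does not actually prove this lemma: it is quoted verbatim from Littelmann (\cite{Littelmann98}, Lemma 3.1), so there is no internal argument to measure you against, only the citation. Judged on its own terms, your sufficiency half is correct and essentially complete. The minuscule argument is the right one: both pairings must equal $1$, and applying $s_\gamma$ and using that $s_\gamma\tau(\omega)$ is again a weight of $V(\omega)$ forces $\la\gamma,\al^\vee\ra\geq 0$, hence $=0$ for distinct simple roots (you do implicitly assume $\al\neq\gamma$, which is how the definition is meant to be read). The orbit computations for $(B_n,\omega_1)$ and $(C_n,\omega_n)$ check out --- in the first case every orbit element $\pm e_i$ pairs positively with at most one simple coroot, and in the second case two adjacent positive pairings would impose contradictory values on some $\epsilon_k$. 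The rank $\leq 2$ case is also fine: $\omega_i$ is singular, singularity is $W$-invariant, and in rank two ``two positive pairings'' means ``regular dominant,'' so the hypothesis of \eqref{eq:def of braidless} is never met. Your classical counterexamples for necessity also verify: $e_1-e_3+e_4+\cdots+e_{j+1}$ lies in $W\omega_j$ for $B_n,C_n$ with $2\leq j\leq n-1$ and pairs positively with both $\al_1^\vee$ and $\al_2^\vee$, and $e_{n-2}+e_n$ (suitably padded for larger $j$) does the job for $D_n$.

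The genuine gap is the one you flag yourself: the exceptional types. With no minuscule weights in $F_4$ and $E_8$, only $\omega_7$ minuscule in $E_7$, and only $\omega_1,\omega_6$ in $E_6$, you owe explicit non-braidless witnesses for roughly two dozen fundamental weights, and ``similar explicit computation in their standard coordinate realizations'' is an IOU for the majority of the remaining case analysis, not a proof. The method would succeed (for each such $\omega_j$ one can exhibit $\tau\in W$ with $\tau(\omega_j)$ pairing positively against two adjacent simple coroots, e.g.\ by applying a short product of simple reflections to $\omega_j$ and tracking the pairings), but as written the necessity direction is only established for the classical series. The ``truly uniform argument'' you wish for would indeed be the interesting contribution; absent that, the exceptional cases must be tabulated, which is presumably why the paper simply cites Littelmann rather than reproving the classification.
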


It is not hard to find all braidless weights (see also \cite{BF-GAFA} Section 5.2). A fundamental weight $\omega$ is braidless if and only if it is in the following collection:
\begin{itemize}
\item $\Phi$ is of type $A$,
\item $\Phi$ is of type $B$ or $C$ and $\omega=\omega_1$ or $\omega_r$,
\item $\Phi$ is of type $D$ and $\omega=\omega_1,\omega_{r-1}$ or $\omega_r$,
\item $\Phi=E_6$ and $\omega=\omega_1$ or $\omega_6$, or $\Phi=E_7$ and $\omega=\omega_7$,
\item $\Phi=G_2$.
\end{itemize}


Suppose $\omega=\omega_i$ is braidless. 
Let $W_\omega$ be the subgroup of $W$ generated by $\{s_\al:\al\in\Delta-\{\al_i\}\}$.  Denote by ${}^\omega W$ the minimal representatives of the set of right cosets $\{W_\omega \tau : \tau \in W\}$.

\begin{Lem}[\cite{Littelmann98} Lemma 3.2]\label{lem:littelmann lemma}
Let $\omega$ be braidless. If $\tau \in {}^\omega W$, then a reduced decomposition of $\tau$ is unique up to the exchange of orthogonal simple reflections.
\end{Lem}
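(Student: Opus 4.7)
The plan is to combine Matsumoto's theorem on reduced words with an analysis of the weight sequence associated to a reduced decomposition inside the orbit $W\omega$. Matsumoto's theorem says any two reduced decompositions of $\tau$ are connected by a sequence of braid moves, and length-two braid moves are precisely commutations of orthogonal simple reflections; hence the lemma is equivalent to the statement that no reduced decomposition of $\tau$ contains an $m$-letter alternating subword $s_a s_b s_a \cdots$ with $m = m_{ab} \geq 3$.

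Write $\tau = s_{i_1} \cdots s_{i_N}$ and set $\mu_j = s_{i_j} \cdots s_{i_1}(\omega)$ for $0 \leq j \leq N$, so $\mu_0 = \omega$ and $\mu_N = \tau^{-1}(\omega)$. The first preliminary observation is that this path descends strictly in the orbit: $\langle \mu_{j-1}, \alpha_{i_j}^\vee \rangle > 0$ for every $j$. Indeed, applying Lemma \ref{lem:bump lemma} to $\tau^{-1}$ shows that $\delta_j := s_{i_1} \cdots s_{i_{j-1}}(\alpha_{i_j})$ lies in the inversion set $\Phi(\tau^{-1})$, and Lemma \ref{lem:springer lemma} then gives $\langle \mu_{j-1}, \alpha_{i_j}^\vee \rangle = \langle \omega, \delta_j^\vee\rangle$. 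The hypothesis $\tau \in {}^\omega W$ forces $\tau^{-1}(\alpha_k) > 0$ for all $k \neq i$, so no positive root in the subsystem generated by $\Delta - \{\alpha_i\}$ can belong to $\Phi(\tau^{-1})$; hence every $\delta_j$ has a strictly positive $\alpha_i$-coefficient, which gives $\langle \omega, \delta_j^\vee \rangle > 0$ since $\omega = \omega_i$.

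Suppose now for contradiction that a reduced decomposition of $\tau$ contains an $m$-letter alternating subword $s_a s_b s_a \cdots$ at positions $k, k+1, \ldots, k+m-1$ with $m = m_{ab} \geq 3$. The strict-descent observation gives $\langle \mu_{k-1}, \alpha_a^\vee\rangle > 0$, so $\alpha_a$ lies in $P(\mu_{k-1}) := \{\alpha \in \Delta : \langle \mu_{k-1}, \alpha^\vee\rangle > 0\}$; the braidless condition applied at the orbit element $\mu_{k-1} \in W\omega$ forces $P(\mu_{k-1})$ to be pairwise orthogonal, and since $\alpha_a \not\perp \alpha_b$ (because $m \geq 3$) we conclude $\langle \mu_{k-1}, \alpha_b^\vee \rangle \leq 0$. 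Introducing the coordinates $x_j = \langle \mu_{k-1+j}, \alpha_a^\vee\rangle$ and $y_j = \langle \mu_{k-1+j}, \alpha_b^\vee\rangle$, the dihedral action of $\langle s_a, s_b\rangle$ on $(x_j, y_j)$ is completely determined by the Cartan integers $p = -\langle \alpha_a, \alpha_b^\vee\rangle$ and $q = -\langle \alpha_b, \alpha_a^\vee\rangle$; a short recursion shows that requiring strict descent at each of the $m$ alternating letters forces $\langle \mu_{k-1}, \alpha_b^\vee\rangle > 0$, contradicting the bound just obtained. Matsumoto's theorem then delivers the lemma: since no length-$\geq 3$ braid move is ever applicable inside the reduced decompositions of $\tau$, any two such decompositions are connected by commutations alone.

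The main obstacle is the dihedral recursion in the third paragraph. Separately verifying it for the finite-type possibilities $m \in \{3, 4, 6\}$ is routine, but a clean uniform statement has to keep track of both Cartan integers $p$ and $q$ (they differ when $m \in \{4, 6\}$) and observe that after the $m$th alternating step the coordinate appearing in the final strict-descent condition equals the initial value $\langle \mu_{k-1}, \alpha_b^\vee\rangle$ itself.
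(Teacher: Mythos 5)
The paper does not prove this lemma: it is imported verbatim from Littelmann's paper, so there is no in-paper argument to compare against. Your proof is correct and self-contained. The reduction via Tits--Matsumoto to the assertion that no reduced word of $\tau$ contains a consecutive alternating subword $s_as_bs_a\cdots$ of length $m_{ab}\ge 3$ is exactly Stembridge's full-commutativity criterion, and the two inputs you feed into it both check out. For (i), the strict descent $\la \mu_{j-1},\al_{i_j}^\vee\ra>0$ does follow from Lemmas \ref{lem:bump lemma} and \ref{lem:springer lemma}, but note that you need $\Phi(\tau^{-1})$ to avoid \emph{all} positive roots of the subsystem generated by $\Delta-\{\al_i\}$, which is slightly stronger than the criterion $\Phi(\tau^{-1})\cap(\Delta-\{\al_i\})=\emptyset$ invoked in the paper's proof of Theorem \ref{thm:main}; the two are equivalent by the standard theory of distinguished coset representatives (e.g.\ via $\ell(s_\beta\tau)=\ell(s_\beta)+\ell(\tau)$ for $\beta\in\Phi^+_{\Delta-\{\al_i\}}$), and this should be said explicitly. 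For (ii), the dihedral recursion does close up as you claim: writing $p=-\la\al_a,\al_b^\vee\ra$, $q=-\la\al_b,\al_a^\vee\ra$ with $pq=1,2,3$ for $m=3,4,6$, the quantity in the $m$-th descent condition evaluates to $x_2=(pq-1)x_0+qy_0=y_0$ for $m=3$, $y_3=(pq-1)y_0+p(pq-2)x_0=y_0$ for $m=4$, and $y_5=y_0$ for $m=6$, contradicting $y_0\le 0$. The clean uniform statement you were after is that $s_{c_1}\cdots s_{c_{m-1}}(\al_{c_m})=\al_b$ when $c_1c_2\cdots c_m=abab\cdots$ is the alternating word of length $m_{ab}$ (it is a reduced word for the longest element of the rank-two subsystem, which sends $\al_{c_m}$ to $-\al_b$), so the $m$-th descent condition is literally $\la\mu_{k-1},\al_b^\vee\ra>0$ with no case analysis needed. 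With these two points made explicit, the argument is a complete proof of the cited lemma.
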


\subsection{Parametrizations}
We describe a canonical parametrization for elements in ${}^\omega W$ if $\omega$ is braidless. Our main result is Theorem \ref{thm:main}. We also give another parametrization in terms of graphs (Corollary \ref{cor:main}). This is motivated by the Weyl group multiple Dirichlet series. (See \cite{CG-littelmann} Section 3; also compare their description and Section \ref{sec:type d} below).

Let $\tau_\ell\in {}^\omega W$ be the longest element and fix a reduced decomposition $\tau_\ell=s_{i_1}\cdots s_{i_N}$. Notice that any series of exchange of orthogonal simple reflections induces a permutation of $\{1,\cdots, N\}$. Let $S_\omega$ be the set of such permutations. Any other reduced decomposition of $\tau_\ell$ is of the form $s_{i_{\sigma(1)}}\cdots s_{i_{\sigma(N)}}$ for a unique permutation $\sigma\in S_{\omega}$.   

\begin{Lem}\label{lem:observations about permutation}
We have the following.
\begin{enumerate}
\item If $j<k$ and $s_{i_j}$ and $s_{i_k}$ do not commute, then $\sigma^{-1}(j)<\sigma^{-1}(k)$ for any $\sigma\in S_\omega$.
\item If $s_{i_j}=s_{i_k}$ and $j<k$, then $\sigma^{-1}(j)<\sigma^{-1}(k)$ for any $\sigma\in S_\omega$.
\item For any $\sigma \in S_\omega$, $\sigma(1)=1$.
\end{enumerate}
\end{Lem}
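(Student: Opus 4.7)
The plan is to track, for each swap in a sequence of orthogonal-reflection exchanges converting $s_{i_1}\cdots s_{i_N}$ to another reduced decomposition $s_{i_{\sigma(1)}}\cdots s_{i_{\sigma(N)}}$, where each originally-labeled letter sits. At any intermediate stage the current word is a reduced decomposition of $\tau_\ell$, and the only allowed move is to swap two adjacent positions $p,p{+}1$ whose letters are orthogonal (hence commuting) simple reflections. After this move, the values of the working inverse permutation at positions $p$ and $p{+}1$ are transposed; in particular, the relative order of any two original labels $j,k$ can flip only when $j$ and $k$ simultaneously occupy adjacent positions and the swap there is legal.

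\textbf{Parts (1) and (2).} For (1), suppose $j<k$ and $s_{i_j}, s_{i_k}$ do not commute. The only way $\sigma^{-1}(j) < \sigma^{-1}(k)$ could fail is if at some intermediate step the two labels are adjacent and get exchanged; but legality of the exchange requires $\langle \alpha_{i_j},\alpha_{i_k}^\vee\rangle = 0$, contradicting non-commutation. Hence their relative order is preserved throughout. For (2), if $s_{i_j}=s_{i_k}=:s_a$ with $j<k$, the labels $j,k$ would again need to be adjacent at some stage in order to swap; but then the current word would contain $s_a s_a$, forcing it to be non-reduced, contradicting Lemma \ref{lem:littelmann lemma}. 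So they never become adjacent, and the order is again preserved.

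\textbf{Part (3).} Write $\omega=\omega_i$. Since $\tau_\ell$ is a minimal representative of the right coset $W_\omega\tau_\ell$, one has $\ell(s_j\tau_\ell)>\ell(\tau_\ell)$ for every $j\neq i$, equivalently $\tau_\ell^{-1}(\alpha_j)\in \Phi^+$ for $j\neq i$. Consequently no reduced decomposition of $\tau_\ell$ can begin with $s_j$ for $j\neq i$, so every reduced expression (including $s_{i_{\sigma(1)}}\cdots s_{i_{\sigma(N)}}$ for each $\sigma\in S_\omega$) must begin with $s_i$. In particular $i_1=i$ and $i_{\sigma(1)}=i$. Let $J=\{j:i_j=i\}$; then $1\in J$ and $1\in\sigma^{-1}(J)$. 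By part (2), $\sigma^{-1}$ is order-preserving on $J$, so $\sigma^{-1}(1)$ is the minimum of $\sigma^{-1}(J)$, forcing $\sigma^{-1}(1)=1$.

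\textbf{Main difficulty.} The combinatorics of (1) and (2) are essentially bookkeeping once the swap-preserves-reducedness principle is in place; the real content is the recognition that braidless weights exclude every move beyond pure commutation (this is where Lemma \ref{lem:littelmann lemma} is indispensable), and the identification, in (3), of the first letter of $\tau_\ell$ using the minimal-coset-representative characterization of ${}^\omega W$. The point that could conceivably cause trouble is ensuring that ${}^\omega W$ really denotes \emph{right} (and not left) coset representatives and that the characterization $\tau_\ell^{-1}(\alpha_j)\in\Phi^+$ for $j\neq i$ is being applied with the correct conventions; once that is fixed, the remaining deductions are short.
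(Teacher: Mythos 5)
Your proof is correct and follows essentially the same route as the paper: track the relative order of labels under adjacent commuting swaps for parts (1) and (2), and use the fact that every reduced word of a minimal right-coset representative must begin with $s_i$ for part (3). The only cosmetic difference is in part (2), where the paper locates an intermediate letter $s_{i_{j'}}$ not commuting with $s_{i_j}=s_{i_k}$ and applies part (1) twice, while you argue directly that two equal letters can never become adjacent in a reduced word; both rest on the same reducedness observation.
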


\begin{proof}
Note that $\sigma^{-1}(j)$ indicates the position of $s_{i_j}$ in new reduced expression $s_{i_{\sigma(1)}}\cdots s_{i_{\sigma(N)}}$. Part (1) is simply a consequence of Lemma \ref{lem:littelmann lemma}. (Note that $\sigma(j)<\sigma(k)$ may not be true for some $\sigma\in S_\omega$.)

We now prove part (2). In this case, there is a $j'$ such that $j<j'<k$ such that $s_{i_j}=s_{i_k}$ does not commute with $s_{i_{j'}}$. Thus by part (1) we have $\sigma^{-1}(j)<\sigma^{-1}(j')<\sigma^{-1}(k)$.

Finally, we have $s_{i_1}=s_{i_{\sigma(1)}}=s_i$ since this is the first element in the reduced decomposition of a minimal representative. If $1<\sigma(1)$ for some $\sigma \in S_\omega$, then by part (2) $\sigma^{-1}(1)<\sigma^{-1}(\sigma(1))=1$, which leads to a contradiction. This proves part (3).
\end{proof}


Define $\mathcal{C}_\omega$ to be a set of certain subsets of $\{1,\cdots, N\}$ as follows. The set $C\in\mathcal{C}_\omega$ if and only if it is of the form $\{\sigma(1),\cdots,\sigma(k)\}$ for some $ k\geq 0$ and $\sigma\in S_\omega$. (If $k=0$, we set $C=\emptyset$.)
Define a map
\[
f_\omega: \mathcal{C}_\omega\to W,\qquad C=\{\sigma(1),\cdots,\sigma(k)\}\mapsto \tau_C= s_{i_{\sigma(1)}}\cdots s_{i_{\sigma(k)}}.
\]
Clearly, this map does not depend on the choice of $\sigma$. If we start with another reduced decomposition of $\tau_\ell$, then the map $f_\omega$ changes by a conjugation induced by some $\sigma\in S_\omega$.

\begin{Thm}\label{thm:main} \
\begin{enumerate}[\normalfont(1)]
\item For $C\in\mathcal{C}_\omega$, we have $\tau_C\in {}^{\omega}W$, and $s_{i_{\sigma(1)}}\cdots s_{i_{\sigma(k)}}$ is a reduced expression.
\item The map $f_\omega: C_\omega \to {}^{\omega}W$ is a bijection.
\end{enumerate}
\end{Thm}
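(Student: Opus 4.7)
I would show $\tau_C$ is a reduced prefix of the reduced expression of $\tau_\ell$ obtained from $\sigma$, hence itself reduced of length $k = |C|$. For $\tau_C \in {}^\omega W$, suppose for contradiction $\ell(s_j \tau_C) < \ell(\tau_C)$ for some $j \neq i$: then $\tau_C$ has a reduced expression beginning with $s_j$, which extends (by concatenating the suffix $s_{i_{\sigma(k+1)}} \cdots s_{i_{\sigma(N)}}$) to a length-$N$ word for $\tau_\ell$, hence a reduced expression of $\tau_\ell$ beginning with $s_j$. By Lemma \ref{lem:littelmann lemma} every reduced expression of $\tau_\ell$ arises from $s_{i_1}\cdots s_{i_N}$ by orthogonal exchanges, and Lemma \ref{lem:observations about permutation}(3) forces its first letter to be $s_{i_1} = s_i$, contradicting $j \neq i$.

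\textbf{Part (2), injectivity.} If $\tau_{C_1} = \tau_{C_2}$, part (1) gives $|C_1| = |C_2| = k$. The two words $s_{i_{\sigma_j(1)}}\cdots s_{i_{\sigma_j(k)}}$ ($j=1,2$) are reduced expressions of the same element, so for each simple reflection $s$ they share the same multiplicity $M_s$. Writing $a_1 < \cdots < a_r$ for the reference positions at which $s$ occurs in $s_{i_1}\cdots s_{i_N}$, Lemma \ref{lem:observations about permutation}(2) forces each $\sigma \in S_\omega$ to preserve the relative order of $a_1, \ldots, a_r$; hence the first $M_s$ occurrences of $s$ in each $\sigma_j$'s expression use exactly $\{a_1, \ldots, a_{M_s}\}$, a set independent of $j$. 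Unionizing over $s$ gives $C_1 = C_2$.

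\textbf{Part (2), surjectivity.} For $w \in {}^\omega W$, I plan to establish $w \leq_R \tau_\ell$ in the right weak order of $W$. Given this, a reduced factorization $\tau_\ell = w \cdot u$ with $\ell(u) = N - \ell(w)$ yields a reduced expression of $\tau_\ell$ with $w$ as a prefix, and Lemma \ref{lem:littelmann lemma} re-expresses this word as $s_{i_{\sigma(1)}} \cdots s_{i_{\sigma(N)}}$ for some $\sigma \in S_\omega$, so that $C = \{\sigma(1),\ldots,\sigma(\ell(w))\}$ satisfies $\tau_C = w$. By the standard inversion-set description of right weak order, $w \leq_R \tau_\ell$ is equivalent to $\Phi(w^{-1}) \subseteq \Phi(\tau_\ell^{-1})$. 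Using the reduced factorization $w_0 = v \cdot \tau_\ell$ with $v$ the longest element of $W_\omega$ (valid since $\tau_\ell$ is longest in ${}^\omega W$), a standard calculation---leveraging that $v \in W_\omega$ preserves the $\alpha_i$-coefficient of any root and hence permutes $\Phi^+ \setminus \Phi^+(W_\omega)$---gives $\Phi(\tau_\ell^{-1}) = \Phi^+ \setminus \Phi^+(W_\omega)$. On the other hand, for $\beta \in \Phi^+(W_\omega)$ expressed as a nonnegative integer combination of $\{\alpha_j\}_{j \neq i}$, applying $w^{-1}$ gives a nonnegative integer combination of positive roots (since $w^{-1}\alpha_j \in \Phi^+$ for $j \neq i$ by definition of ${}^\omega W$), which is itself a root and therefore lies in $\Phi^+$. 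So $\Phi(w^{-1}) \cap \Phi^+(W_\omega) = \emptyset$, yielding $\Phi(w^{-1}) \subseteq \Phi(\tau_\ell^{-1})$.

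The hard part will be the surjectivity step. Part (1) and injectivity fall out rather quickly from Lemmas \ref{lem:littelmann lemma} and \ref{lem:observations about permutation}---with Lemma \ref{lem:observations about permutation}(2) playing the crucial role when the reference reduced word contains repeated simple reflections (as in type $B$ or $C$, where the longest element of ${}^\omega W$ for $\omega = \omega_1$ has a palindromic reduced expression). The two inversion-set identifications underlying surjectivity---the shape of $\Phi(\tau_\ell^{-1})$ and the stability of $\Phi^+(W_\omega)$ under $w^{-1}$---depend on the reduced-product structure $w_0 = v \cdot \tau_\ell$; these computations are standard in Coxeter-theoretic contexts but must be invoked carefully, and, interestingly, do not themselves use the braidless hypothesis (which enters only in converting the weak-order prefix to an element of $\mathcal{C}_\omega$ via Lemma \ref{lem:littelmann lemma}).
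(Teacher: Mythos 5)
Your proposal is correct, and all three pieces go through; the interesting divergence from the paper is in the surjectivity step. For part (1) you argue via descents and the fact (Lemmas \ref{lem:littelmann lemma} and \ref{lem:observations about permutation}(3)) that every reduced word of $\tau_\ell$ begins with $s_i$, whereas the paper computes $\Phi(\tau_C^{-1})\subseteq\Phi(\tau_\ell^{-1})$ directly from Lemma \ref{lem:bump lemma}; these are two faces of the same reduction to the minimality of $\tau_\ell$. Your injectivity argument is essentially the paper's, with the same reliance on Lemma \ref{lem:observations about permutation}(2) to pin down which occurrences of each letter land in the prefix (do make explicit that the equality of multiplicities $M_s$ comes from Lemma \ref{lem:littelmann lemma} applied to $\tau_{C_1}=\tau_{C_2}\in{}^\omega W$, not from reducedness alone --- in general braid moves do change the multiset of letters). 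For surjectivity the paper avoids weak order entirely: writing $w_0=\tau^\ell\tau_\ell$ with $\tau^\ell$ the longest element of $W_\omega$, it deduces $\ell(\tau)+\ell(\tau^{-1}\tau_\ell)=\ell(\tau_\ell)$ from $\ell(w_0w)=\ell(w_0)-\ell(w)$ together with the length additivity $\ell(u\tau)=\ell(u)+\ell(\tau)$ for $u\in W_\omega$, $\tau\in{}^\omega W$; concatenating reduced words then exhibits $\tau$ as a reduced prefix of $\tau_\ell$. You reach the same prefix statement by the inversion-set characterization of right weak order, using $\Phi(\tau_\ell^{-1})=\Phi^+\setminus\Phi^+(W_\omega)$ and $\Phi(w^{-1})\cap\Phi^+(W_\omega)=\emptyset$. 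Both are sound; the paper's route is more self-contained (it only cites the parabolic length additivity), while yours imports the standard weak-order criterion but makes the underlying geometry more transparent and, as you note, isolates the braidless hypothesis in the final conversion via Lemma \ref{lem:littelmann lemma}.
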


\begin{Ex}\label{ex:first one}
Let $\fg=\fso_{8}$ and we choose the following enumeration:
\[
 \begin{tikzpicture}
        \node (a1) at (-.707,0.707) {$\al_1$};
        \node (a2) at (-.707,-0.707) {$\al_2$};
        \node (a3) at (0,0) {$\al_3$};
        \node (a5) at (1,0) {$\al_4$};
  \draw [-] (a1) to (a3);
  \draw [-] (a2) to (a3);
  \draw [-] (a3) to (a5);
  \end{tikzpicture}
\]
This is different from the usual enumeration. (It is a good enumeration in the sense of Section \ref{sec:good enumeration}.)
Let  $\omega=\omega_4$. The longest element in ${}^\omega W$ is $\tau_\ell=s_4s_3s_1s_2s_3s_4$. Then
$S_{\omega}=\{\mathrm{id}, (34)\}$,
\[
\mathcal{C}_\omega=\{
\emptyset, \ \{1\}, \ \{1,2\}, \ \{1,2,3\}, \ \{1,2,4\}, \ \{1,2,3,4\}, \ \{1,2,3,4,5\}, \ \{1,2,3,4,5,6\}
\},
\]
and
\[
{}^\omega W=\{ I,\ s_4,\ s_4s_3, \ s_4s_3 s_1,\ s_4s_3s_2, \ s_4s_3s_1s_2,\ s_4s_3s_1s_2s_3,\ s_4s_3s_1s_2s_3s_4\}.
\]

\end{Ex}

\begin{proof}
(1)\ Clearly $s_{i_{\sigma(1)}}\cdots s_{i_{\sigma(k)}}$ is a reduced expression. Otherwise, $s_{i_{\sigma(1)}}\cdots s_{i_{\sigma(N)}}$ is not reduced. 
To show that $\tau_C$ is a minimal representative, we need to show that $\Phi(\tau_C^{-1})\cap (\Delta-\{\al_i\})=\emptyset$. (See \cite{BT72} Proposition 3.9 or \cite{casselman1995} Lemma 1.1.2. The statement we need is slightly different from these two references but can be deduced from them.)  Without loss of generality, we may assume $\sigma$ is the identity element.
By Lemma \ref{lem:bump lemma},
\[
\Phi(\tau_\ell^{-1})=\{\al_{i_1},\ s_{i_1}(\al_{i_2}),\ \cdots,\ s_{i_1}\cdots s_{i_{N-1}}(\al_{i_N}) \}
\]
 and \[
 \Phi(\tau_C^{-1})=\{\al_{i_1},\ s_{i_1}(\al_{i_2}),\ \cdots,\ s_{i_1}\cdots s_{i_{k-1}}(\al_{i_k}) \}.
\]
Thus $\Phi(\tau_C^{-1})\subset \Phi(\tau_\ell^{-1})$. The fact that $\tau_\ell$ is minimal implies that $\tau_C$ is minimal as well.

(2)\ We first show that this map is a surjection. Let $\tau\in {}^\omega W$. Let $\tau^\ell$ be the longest element in $W_\omega$. Then $w_0=\tau^\ell\tau_\ell$ is the longest element in $W$, and $\ell(w_0)-\ell(\tau^{-1}_\ell\tau)=\ell(w_0\cdot \tau_\ell^{-1}\tau)$.  Thus
\[
\ell(w_0)-\ell(\tau^{-1}\tau_\ell)=\ell(\tau^\ell\tau).
\]
By \cite{casselman1995} Lemma 1.1.2, we know $\ell(\tau^\ell\tau)=\ell(\tau^\ell)+\ell(\tau)$ and $\ell(w_0)=\ell(\tau^\ell)+\ell(\tau_\ell)$.  Thus
\[
\ell(\tau)+\ell(\tau^{-1}\tau_\ell)=\ell(\tau_{\ell}).
 \]
 This means that if we choose reduced decompositions $s_{j_1}\cdots s_{j_k}$ and $s_{j_{k+1}}\cdots s_N$ for $\tau$ and $\tau^{-1}\tau_\ell$ respectively, then $s_{j_1}\cdots s_{j_N}$ is a reduced decomposition for $\tau_\ell$. Thus there is a permutation $\sigma\in S_\omega$ such that $j_i=\sigma(i)$ and $\tau=s_{i_{\sigma(1)}}\cdots s_{i_{\sigma(k)}}$.

Next we show $f_\omega$ is injective.  Without loss of generality, we show that if there are two sets
\[
\{1,\cdots, k\} ,\qquad \{\sigma(1),\cdots,\sigma(k)\}
\]
such that $s_{i_1}\cdots s_{i_k}=s_{i_{\sigma(1)}}\cdots s_{i_{\sigma(k)}}$, then $\{1,\cdots, k\}=\{\sigma(1),\cdots, \sigma(k)\}$. By part (1) $s_{i_1} \cdots s_{i_k}=s_{i_{\sigma(1)}} \cdots s_{i_{\sigma(k)}}$ are minimal length representatives of a right coset. By Lemma \ref{lem:littelmann lemma}, these words differ by a permutation of commuting simple reflections. Therefore the multi-sets $\{i_1,\cdots, i_k\}$ and $\{i_{\sigma(1)},\cdots, i_{\sigma(k)}\}$ are the same.

We now show that $\sigma^{-1}$ maps the set $\{1,\cdots,k\}$ to itself. Consider $1\leq t \leq r$ and let $1\leq j_1  < \cdots < j_m \leq N$ denote the instances where $s_t$ appears in $s_{i_1}\cdots s_{i_N}$. Then by Lemma \ref{lem:observations about permutation} part (2) the positions of $s_t$ in $s_{i_{\sigma(1)}} \cdots s_{i_{\sigma(N)}}$ are $\sigma^{-1}(j_1) < \cdots < \sigma^{-1}(j_m)$.
If $s_t$ appears $m'\leq m$ times in $s_{i_1}\cdots s_{i_k}$, then by the equality of multisets it appears $m'\leq m$ times in $s_{i_{\sigma(1)}} \cdots s_{i_{\sigma(k)}}$ as well. Thus exactly $m'$ of $\sigma^{-1}(j_1) < \cdots < \sigma^{-1}(j_m)$ fall into $[1,k]$.

So if $j_{m'}\leq k<j_{m'+1}$ (or $m=m'\leq k$), then $\sigma^{-1}(j_{m'})\leq k < \sigma^{-1}(j_{m'+1})$ (or $\sigma^{-1}(j_{m'})=\sigma^{-1}(j_m)\leq k$). Therefore the subset of $\{1,\cdots,k\}$ corresponding to the appearances of $s_t$ is mapped to a subset of $\{1,\cdots,k\}$ under $\sigma^{-1}$. This is true for any $1\leq t \leq r$, so the proof is complete.


\end{proof}

We now introduce another description which is slightly easier to work with. Our motivation comes from the decoration graph in  \cite{CG-littelmann} Section 3. We now explain it carefully.  We again start with a reduced decomposition $\tau_\ell=s_{i_1}\cdots s_{i_N}$ and define a directed graph $T_{i_1,\cdots,i_N}$ (with labelling) as follows.  The vertices of $T_{i_1,\cdots, i_N}$ are $\{1,\cdots, N\}$. An edge $j\to k$ is in  $T_{i_1,\cdots, i_N}$ if and only if $s_{i_j}$ and $s_{i_k}$ do not commute and there exists $\sigma\in S_\omega$ and $1\leq l\leq N$ such that $\sigma(l)=j$ and $\sigma(l+1)=k$. (In other words, $s_{i_j}$ and $s_{i_k}$ can be moved by a series of orthogonal permutations so that they are next to each other). We also label the vertex $j$ by $s_{i_j}$. It is easy to see that the graph $T_{i_1,\cdots, i_N}$ is connected. Also notice  that if $(j\to k)\in T_{i_1,\cdots, i_N}$, then $\sigma^{-1}(j)<\sigma^{-1}(k)$ for all $\sigma\in S_\omega$.

\begin{Ex}\label{ex:2.3}
Let $\fg=\fso_{8}$ and $\omega=\omega_4$ (the enumeration used here is the same as Example \ref{ex:first one}). The longest element is $\tau_\ell=s_4s_3s_1s_2s_3s_4$. Then the graph $T_{4,3,1,2,3,4}$ is
\[
  \begin{tikzpicture}
        \node (A) at (-1.707,0) {$1$};
        \node (B) at (-0.707,0) {$2$};
        \node (C) at (0,0.707) {$3$};
        \node (D) at (0,-0.707) {$4$};
        \node (E) at (0.707,0) {$5$};
        \node (F) at (1.707,0) {$6$};

                        \draw [->] (A) to (B);
  \draw [->] (B) to (C);
  \draw [->] (B) to (D);
  \draw [->] (C) to (E);
  \draw [->] (D) to (E);
  \draw [->] (E) to (F);
                        \end{tikzpicture},\qquad
        \begin{tikzpicture}
        \node (A) at (-1.707,0) {$s_4$};
        \node (B) at (-0.707,0) {$s_3$};
        \node (C) at (0,0.707) {$s_1$};
        \node (D) at (0,-0.707) {$s_2$};
        \node (E) at (0.707,0) {$s_3$};
        \node (F) at (1.707,0) {$s_4$};

                        \draw [->] (A) to (B);
  \draw [->] (B) to (C);
  \draw [->] (B) to (D);
  \draw [->] (C) to (E);
  \draw [->] (D) to (E);
  \draw [->] (E) to (F);
                        \end{tikzpicture}.
\]
\end{Ex}

 Let $\mathcal{G}_\omega$ be set of subgraphs of $T_{i_1,\cdots,i_N}$ with the following properties:
\begin{enumerate}
\item If $T$ is not the empty subgraph of $T_{i_1,\cdots,i_N}$, then $1\in T$.
\item If $k\in T$, and the edge $(j\to k)\in T_{i_1,\cdots,i_N}$, then $j\in T$.
\end{enumerate}

\begin{Ex}[Continuation of Example \ref{ex:2.3}]
The set $\mathcal{G}_\omega$ consists of the following subgraphs:
\[
\emptyset, \qquad
 \begin{tikzpicture}
        \node (A) at (-1.707,0) {$s_4$};
                        \end{tikzpicture},
                        \qquad
        \begin{tikzpicture}
        \node (A) at (-1.707,0) {$s_4$};
        \node (B) at (-0.707,0) {$s_3$};

                        \draw [->] (A) to (B);
                        \end{tikzpicture},
                        \qquad
        \begin{tikzpicture}
        \node (A) at (-1.707,0) {$s_4$};
        \node (B) at (-0.707,0) {$s_3$};
        \node (C) at (0,0.707) {$s_1$};

                        \draw [->] (A) to (B);
  \draw [->] (B) to (C);
                        \end{tikzpicture},
\]
\[
        \begin{tikzpicture}
        \node (A) at (-1.707,0) {$s_4$};
        \node (B) at (-0.707,0) {$s_3$};

        \node (D) at (0,-0.707) {$s_2$};

                        \draw [->] (A) to (B);

  \draw [->] (B) to (D);

                        \end{tikzpicture},
                        \qquad
        \begin{tikzpicture}
        \node (A) at (-1.707,0) {$s_4$};
        \node (B) at (-0.707,0) {$s_3$};
        \node (C) at (0,0.707) {$s_1$};
        \node (D) at (0,-0.707) {$s_2$};

                        \draw [->] (A) to (B);
  \draw [->] (B) to (C);
  \draw [->] (B) to (D);

                        \end{tikzpicture},
                     \qquad
        \begin{tikzpicture}
        \node (A) at (-1.707,0) {$s_4$};
        \node (B) at (-0.707,0) {$s_3$};
        \node (C) at (0,0.707) {$s_1$};
        \node (D) at (0,-0.707) {$s_2$};
        \node (E) at (0.707,0) {$s_3$};

                        \draw [->] (A) to (B);
  \draw [->] (B) to (C);
  \draw [->] (B) to (D);
  \draw [->] (C) to (E);
  \draw [->] (D) to (E);
                        \end{tikzpicture},
                        \qquad
                        \begin{tikzpicture}
        \node (A) at (-1.707,0) {$s_4$};
        \node (B) at (-0.707,0) {$s_3$};
        \node (C) at (0,0.707) {$s_1$};
        \node (D) at (0,-0.707) {$s_2$};
        \node (E) at (0.707,0) {$s_3$};
        \node (F) at (1.707,0) {$s_4$};

                        \draw [->] (A) to (B);
  \draw [->] (B) to (C);
  \draw [->] (B) to (D);
  \draw [->] (C) to (E);
  \draw [->] (D) to (E);
  \draw [->] (E) to (F);
                        \end{tikzpicture}.
\]
\end{Ex}

\begin{Lem}
There is a natural bijection between $\mathcal{G}_\omega$ and $\mathcal{C}_\omega$.
\end{Lem}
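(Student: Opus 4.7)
The plan is to define $\Phi : \mathcal{C}_\omega \to \mathcal{G}_\omega$ by sending $C$ to the subgraph of $T_{i_1,\ldots,i_N}$ induced on the vertex set $C$, and to prove it is a bijection. For well-definedness, I would check the two conditions of $\mathcal{G}_\omega$: writing $C = \{\sigma(1), \ldots, \sigma(k)\}$ for some $\sigma \in S_\omega$, condition (1) is immediate from Lemma~\ref{lem:observations about permutation}(3), which gives $\sigma(1) = 1 \in C$ whenever $k \geq 1$; for condition (2), if $k \in C$ and $(j \to k) \in T_{i_1,\ldots,i_N}$, then by the observation recorded just after the definition of the graph, $\sigma^{-1}(j) < \sigma^{-1}(k) \leq |C|$, whence $j \in C$. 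Injectivity of $\Phi$ is immediate, since $\Phi(C)$ has vertex set exactly $C$.

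The main content is surjectivity. Given $T \in \mathcal{G}_\omega$ with vertex set $V$ of size $k$, I would construct $\sigma \in S_\omega$ with $V = \{\sigma(1), \ldots, \sigma(k)\}$ by induction on $k$. The base $k = 0$ is trivial. For $k \geq 1$, pick a maximal element $v \in V$ with respect to the order induced by $T_{i_1,\ldots,i_N}$; one may take $v$ to be the element of $V$ of largest index in the original ordering, since edges of $T_{i_1,\ldots,i_N}$ go from smaller to larger indices. When $k \geq 2$ this forces $v \neq 1$. Using the closure condition (2) for $V$ together with the maximality of $v$, I would check that $V \setminus \{v\} \in \mathcal{G}_\omega$; by induction, there exists $\sigma' \in S_\omega$ with $\{\sigma'(1), \ldots, \sigma'(k-1)\} = V \setminus \{v\}$, and in particular $p := (\sigma')^{-1}(v) \geq k$.

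The most delicate step, which I expect to be the main obstacle, is then modifying $\sigma'$ so that $v$ moves from position $p$ to position $k$ while the resulting permutation remains in $S_\omega$. For each $l$ with $k \leq l < p$, writing $u = \sigma'(l) \notin V$, I would argue that $s_{i_v}$ and $s_{i_u}$ commute: an edge $u \to v$ in $T_{i_1,\ldots,i_N}$ would violate closure of $V$, and an edge $v \to u$ would force $p < l$. Since $T_{i_1,\ldots,i_N}$ should encode the cover relations of the partial order on $\{1, \ldots, N\}$ induced by non-commuting reflections, applying the closure argument along any hypothetical path in this partial order from $u$ to $v$ (or $v$ to $u$) rules out a non-trivial relation between them, forcing commutation. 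We may therefore swap $s_{i_v}$ past $s_{i_{\sigma'(l)}}$ successively for $l = p-1, p-2, \ldots, k$ to obtain the required $\sigma \in S_\omega$, completing the induction.
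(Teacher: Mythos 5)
Your overall strategy coincides with the paper's: both directions of the correspondence are set up the same way, the easy direction (from subsets to induced subgraphs) is handled identically via Lemma~\ref{lem:observations about permutation} and the observation that edges increase position under every $\sigma^{-1}$, and the hard direction proceeds by induction on the number of vertices, deleting a maximal vertex $v$ and then commuting $s_{i_v}$ back to position $k$. The gap is in your justification of that last commuting step. You argue that each $u=\sigma'(l)$ with $k\leq l<p$ commutes with $s_{i_v}$ because there is no edge between $u$ and $v$ in either direction; but the absence of an edge in $T_{i_1,\cdots,i_N}$ does not imply commutation, since an edge requires not only that the two reflections fail to commute but also that the two positions can be made adjacent by orthogonal swaps. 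Non-commuting pairs with no edge do occur: in Example~\ref{ex:2.3} (type $D_4$, $\tau_\ell=s_4s_3s_1s_2s_3s_4$), the vertices $1$ and $5$ carry the non-commuting reflections $s_4$ and $s_3$ but are not joined by an edge. Your proposed repair --- that $T_{i_1,\cdots,i_N}$ ``encodes the cover relations'' of the non-commutation order, so that closure can be propagated along a directed path from $u$ to $v$ --- is precisely the missing content: you would need to prove that every non-commuting pair $u<v$ is joined by a directed path in $T_{i_1,\cdots,i_N}$, and this is not obvious (the natural induction breaks down because intermediate vertices on such a path may carry reflections that commute with, or coincide with, $s_{i_u}$ or $s_{i_v}$, as happens for the pair $(1,5)$ above, whose connecting path passes through a second copy of $s_3$).

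The paper sidesteps this with a small but essential twist: rather than treating each $u$ separately, suppose some $\sigma'(l)$ with $k\leq l<p$ fails to commute with $s_{i_v}$ and take the one at the \emph{maximal} such position $l$. Then every reflection strictly between positions $l$ and $p$ commutes with $s_{i_v}$, so $v$ can be brought adjacent to $\sigma'(l)$, which produces a genuine edge $\sigma'(l)\to v$ in $T_{i_1,\cdots,i_N}$; the closure property of $V$ then forces $\sigma'(l)\in V$, contradicting the fact that positions $k,\cdots,p-1$ lie outside $V$. This yields exactly the conclusion you want (all of $\sigma'(k),\cdots,\sigma'(p-1)$ commute with $s_{i_v}$) without any path-connectivity claim. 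With that substitution your argument is complete and agrees with the paper's.
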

\begin{proof}
The map from $\mathcal{G}_\omega$ to $\mathcal{C}_\omega$ is clear. Let $T\in \mathcal{G}_\omega$ and  $\{j_1,\cdots,j_k\}$ be the vertices of $T$. We simply send a graph $T$ to its set of vertices. To verify that this set satisfies the desired properties, we argue by induction on $k$.  If  $k=0$ or $1$, this is clear.
Now suppose $k\geq 2$. Recall that if $(j\to j')\in T$, then $j<j'$. Thus there must be a maximal element (say, $j_k$) in $T$ (i.e. there is no edge coming out from this maximal element). Delete $j_k$ and we obtain a  graph in $\mathcal{G}_\omega$ with shorter length. Now apply induction, and (possibly) rearrange the indices in $\{j_1,\cdots, j_{k-1}\}$, we see that there is a reduced decomposition of $\tau_\ell$ which is of the form
\[
(s_{i_{j_1}}\cdots s_{i_{j_{k-1}}}) \cdots s_{i_{j_k}} \cdots
\]
We need to show that  $s_{i_{j_k}}$ can be moved (by a series of orthogonal permutations) to the position which is right after $s_{i_{j_{k-1}}}$. If this is not true, then there is a simple reflection $s_{i_{j'}}$ strictly between $s_{i_{j_{k-1}}}$ and $s_{i_{j_{k}}}$ such that $s_{i_{j'}}$ does not commute with $s_{i_{j_{k}}}$. Choose the maximal $j'$. Then $(j'\to j_k)\in T_{i_1,\cdots,i_N}$ and thus $j'\in T$. This is a contradiction.

Conversely, given a nonempty subset $\{\sigma(1),\cdots,\sigma(k)\}$ for some $k$ and $\sigma\in S_\omega$, we can define a subgraph $T$ by taking all the possible edges between these vertices.
First of all, by Lemma \ref{lem:observations about permutation}, $\sigma(1)=1\in T$.
If there is $(j'\to \sigma(j))\in T_{i_1,\cdots, i_N}$ for some $j'$ (might not be in $T$) and $1\leq j\leq k$. Then $\sigma^{-1}(j')<j$ and therefore $j'=\sigma(\sigma^{-1}(j'))\in T$. This implies that $T\in \mathcal{G}_\omega $ and  we are done.
\end{proof}
We have the following immediate corollary.

\begin{Cor}\label{cor:main}
There is a natural bijection between ${}^\omega W$ and $\mathcal{G}_\omega$.
\end{Cor}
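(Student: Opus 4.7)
The plan is to observe that this corollary is essentially a formal consequence of the two bijections already established above, namely Theorem \ref{thm:main}(2) and the preceding Lemma. I would simply compose them: the Lemma gives a bijection $\mathcal{G}_\omega \to \mathcal{C}_\omega$ sending a subgraph $T$ to its vertex set, and $f_\omega : \mathcal{C}_\omega \to {}^\omega W$ is the bijection of Theorem \ref{thm:main}(2). The composition is the desired natural bijection.

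Concretely, I would describe the map as follows. Given $T \in \mathcal{G}_\omega$ with vertex set $C = \{\sigma(1), \ldots, \sigma(k)\}$ for some $\sigma \in S_\omega$ (which exists by the Lemma), send $T$ to
\[
\tau_C = s_{i_{\sigma(1)}} \cdots s_{i_{\sigma(k)}} \in {}^\omega W.
\]
Since the Lemma establishes that a subset of $\{1,\ldots,N\}$ arises as the vertex set of some $T \in \mathcal{G}_\omega$ if and only if it lies in $\mathcal{C}_\omega$, and since $f_\omega$ is independent of the choice of $\sigma$, this map is well-defined, and it is a bijection because both constituent maps are.

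There is no real obstacle here — all of the substantive work has already been carried out in Theorem \ref{thm:main} and the preceding Lemma. The only minor point worth flagging is that ``natural'' should be read relative to the fixed reduced decomposition $\tau_\ell = s_{i_1} \cdots s_{i_N}$; a different reduced decomposition would change both $\mathcal{G}_\omega$ and the bijection by a conjugation induced by an element of $S_\omega$, exactly as noted for $f_\omega$ in the paragraph preceding Theorem \ref{thm:main}. I would end the proof with a one-line reference to this composition rather than repeating any of the arguments.
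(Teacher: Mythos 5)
Your proposal is correct and is exactly what the paper does: the corollary is stated as an immediate consequence of composing the bijection $\mathcal{G}_\omega \to \mathcal{C}_\omega$ from the preceding lemma with the bijection $f_\omega : \mathcal{C}_\omega \to {}^\omega W$ of Theorem \ref{thm:main}. Your remark about naturality being relative to the fixed reduced decomposition matches Remark \ref{rem:graph} in the paper.
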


\begin{Rem}\label{rem:graph}
If we start with a different reduced decomposition of $\tau_\ell$, then the graph is obtained by relabelling the vertices according to a permutation $\sigma\in S_\omega$. In Appendix \ref{app:examples}, we work out all the examples. By abuse of notation, we simply call it $T_{\tau_\ell}$. We only give the labelling without giving the indices.
\end{Rem}

\subsection{Good enumeration}\label{sec:good enumeration}
We now recall the notion of \textit{good enumeration} in the sense of \cite{Littelmann98} Section 4.
Let $D$ be the Dynkin diagram of $\fg$.  We write $D-\{\al\}$ for the diagram obtained by removing the node of $\al$ from $D$.

Suppose that the set of simple roots of $\fg$ admits an enumeration
\[
\Delta=\{\al_1,\cdots, \al_n\} \text{ such that }\al_i \text{ is braidless for }D-\{\al_{i+1},\cdots,\al_{n}\}
\]
for all $i=1,\cdots, n$. We call this  a \textit{good enumeration}. (Note that there is a misprint in \cite{Littelmann98}, before Remark 4.1.)

Note that all simple Lie algebras admit such a \textit{good} enumeration except the ones  of type $F_4$ and $E_8$. For such good enumeration of the respective types, see \cite{Littelmann98} Section 5-8.

Let $\fg$ be a simple algebra except the ones  of type $F_4$ and $E_8$ and take a good enumeration of the set of simple roots.
Let $W_j$ be the Weyl group generated by the simple reflections $s_{1},\cdots, s_{j}$ and let ${}^j W_j$ be the set of minimal representatives in $W_j$ of $W_{j-1}\bs W_j$. Let ${}^j\mathcal{G}_j$ be $\mathcal{G}_\omega$ where the root system is generated by $\{\al_1,\cdots,\al_j\}$ and $\omega=\omega_j$.

For a simple Lie algebra admitting a good enumeration, we obtain a natural parametrization of the Weyl group $W(\fg)$.

\begin{Prop}\label{prop:main result in weyl}
There is a natural bijection
\[
{}^1\mathcal{G}_1\times \cdots \times {}^n\mathcal{G}_n
\simeq {}^1W_1\times \cdots \times {}^nW_n\simeq W(\fg).
\]
\end{Prop}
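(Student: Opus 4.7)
The plan is to combine two separate ingredients: the corollary just established for braidless weights, and the standard length-additive factorization of Coxeter groups along a chain of parabolic subgroups.

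First, I would handle the bijection ${}^1\mathcal{G}_1\times\cdots\times{}^n\mathcal{G}_n \simeq {}^1W_1\times\cdots\times{}^nW_n$. This is done factor by factor. The assumption that the enumeration $\{\al_1,\dots,\al_n\}$ is good says precisely that $\al_j$ is braidless inside the Dynkin diagram $D-\{\al_{j+1},\dots,\al_n\}$, i.e.\ that $\omega_j$ is a braidless fundamental weight for the root system generated by $\{\al_1,\dots,\al_j\}$ whose Weyl group is $W_j$. Since ${}^jW_j$ is by definition the set of minimal representatives of $W_{j-1}\bs W_j$, which in the notation of Section~\ref{sec:braidless and minimal} is exactly ${}^{\omega_j}W_j$, Corollary~\ref{cor:main} (applied to $W_j$ and $\omega_j$) gives a natural bijection ${}^j\mathcal{G}_j \simeq {}^jW_j$. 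Taking the product over $j=1,\dots,n$ yields the first bijection.

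Next I would establish the bijection ${}^1W_1\times\cdots\times{}^nW_n \simeq W(\fg)$ by induction on $n$. The key input is the standard fact that for any Coxeter group $W$ and any subset of simple reflections generating a parabolic subgroup $W'$, every $w\in W$ admits a unique factorization $w=w'\cdot u$ with $w'\in W'$ and $u$ the minimal length representative of the coset $W'w$, and moreover $\ell(w)=\ell(w')+\ell(u)$ (see \cite{casselman1995} Lemma 1.1.2, which was already invoked in the proof of Theorem~\ref{thm:main}). Applied to $W_j\supset W_{j-1}$, this gives a bijection
\[
W_j \;\longleftrightarrow\; W_{j-1}\times {}^jW_j, \qquad w\mapsto (w',u).
\]
Iterating, every $w\in W(\fg)=W_n$ is uniquely expressible as an ordered product $w=u_1 u_2\cdots u_n$ with $u_j\in {}^jW_j$, and the individual factors can be recovered inductively (peel off $u_n$ from $w$, then $u_{n-1}$ from $w\cdot u_n^{-1}\in W_{n-1}$, etc.). This gives the second bijection.

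Composing the two yields the stated natural bijection. I do not anticipate a real obstacle: the only point that requires a moment of care is checking that the minimal representatives in the definition of ${}^jW_j$ agree with the ones in Section~\ref{sec:braidless and minimal} (so that Corollary~\ref{cor:main} applies), and that the length additivity across the chain $W_1\subset W_2\subset\cdots\subset W_n$ ensures the uniqueness of the iterated factorization. Both are standard once one observes that the good enumeration hypothesis is exactly what makes each $\omega_j$ braidless in its own $W_j$.
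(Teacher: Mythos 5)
Your proposal is correct and follows essentially the same route as the paper: the first bijection is obtained by applying Corollary~\ref{cor:main} to each $(W_j,\omega_j)$ (which is exactly what the good enumeration hypothesis licenses), and the second is the product map $(w_1,\dots,w_n)\mapsto w_1\cdots w_n$, whose bijectivity the paper leaves implicit but which your iterated parabolic factorization $W_j\simeq W_{j-1}\times{}^jW_j$ justifies. The only difference is that you spell out details the paper compresses into two sentences.
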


\begin{proof}
The first bijection is an immediate consequence of Corollary \ref{cor:main} and the definition of good enumerations. The second bijection is given by
\[
{}^1W_1\times \cdots \times {}^nW_n\simeq W(\fg),\qquad (w_1,\cdots, w_n)\mapsto w_1 \cdots w_n.
\]
\end{proof}

The reduced decomposition of the longest element $w_0$ obtained in this way is called \textit{nice} decomposition.

\subsection{Examples}\label{sec:examples}
One can write down a more explicit version of Proposition \ref{prop:main result in weyl} using the description in Appendix \ref{app:examples}.
To avoid interrupting the application, here we only give two examples (type $A$ and type $C$) which are required in Section \ref{sec:WGMDS}.

We choose the following good enumerations:
\begin{equation}\label{eq:good enum type A C}
\begin{aligned}
\text{Type }A_r: & \  \begin{tikzpicture}
        \node (a1) at (-2,0) {$\al_1$};
        \node (a2) at (-1,0) {$\al_2$};
        \node (a3) at (0,0) {$\al_3$};
        \node (a4) at (1,0) {$\cdots$};
        \node (a5) at (2,0) {$\al_r$};
  \draw [-] (a1) to (a2);
  \draw [-] (a2) to (a3);
  \draw [-] (a3) to (a4);
  \draw [-] (a4) to (a5);
                        \end{tikzpicture},\\
\text{Type } C_r: & \ \begin{tikzpicture}
        \node (a1) at (-2,0) {$\al_1$};
        \node (a2) at (-1,0) {$\al_2$};
        \node (a3) at (0,0) {$\al_3$};
        \node (a4) at (1,0) {$\cdots$};
        \node (a5) at (2,0) {$\al_r$};
\draw [-] (-1.65,0.04) to (-1.35,0.04);
\draw [-] (-1.65,-0.04) to (-1.35,-0.04);
  \draw [-] (a2) to (a3);
  \draw [-] (a3) to (a4);
  \draw [-] (a4) to (a5);
                        \end{tikzpicture}.
                        \end{aligned}
\end{equation}

Notice that in both cases, the reduced decomposition for the longest element $\tau_\ell$ in ${}^\omega W$ is unique when $\omega=\omega_r$. Thus the corresponding parametrization for ${}^\omega W$ for these two types is indeed simpler.

In type $A$, given $0\leq a_i\leq i$, define
\[
\pi_{a_i}=
\begin{cases}
  s_i\cdots s_{i-a_i+1}, & \mbox{if } 1\leq a_i\leq i, \\
  e \ \text{(the identity element)}, & \mbox{if }a_i=0.
\end{cases}
\]
In type $C$, given $0\leq a_i\leq 2i-1$, define
\[
\pi_{a_i}=
\begin{cases}
  s_i\cdots s_{i-a_i+1}, & \mbox{if } 1\leq a_i\leq i, \\
   s_i\cdots s_2 s_1 s_2 \cdots s_{a_i-i+1}, & \mbox{if } i+1\leq a_i\leq 2i-1, \\
  e \ \text{(the identity element)}, & \mbox{if }a_i=0.
\end{cases}
\]
Define
\[
\mathcal{D}_r=
\begin{cases}
\text{Type }A:& \ \{(a_1,\cdots,a_r):0\leq a_i\leq i, \text{ for all }i\},\\
\text{Type }C :& \ \{(a_1,\cdots,a_r):0\leq a_i\leq 2i-1, \text{ for all }i\}.
\end{cases}
\]

We now state the following explicit parametrization for the Weyl group in the cases of type $A$ and type $C$. See also \cite{Cai18} for the case of type $A$.

\begin{Prop}\label{prop:bijection type A C}
The nice decompositions obtained for the enumerations in \eqref{eq:good enum type A C} are
\[
\begin{aligned}
\text{Type }A : & \ w_0=(s_1)(s_2s_1)\cdots (s_r\cdots s_1),\\
\text{Type }C : & \ w_0=(s_1)(s_2s_1s_2)\cdots (s_r s_{r-1}\cdots s_1\cdots s_{r-1} s_r).
\end{aligned}
\]
The map
\[
\mathcal{D}_r\to W, \qquad (a_1,\cdots,a_r)\mapsto \pi_{a_1}\cdots \pi_{a_r}
\]
is a bijection.
\end{Prop}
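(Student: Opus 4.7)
The strategy is to apply Proposition~\ref{prop:main result in weyl}, which reduces everything to describing each ${}^jW_j$ explicitly and to writing its longest element $\tau_\ell^{(j)}$ as a chosen reduced product. First I would check that the two enumerations in \eqref{eq:good enum type A C} are good enumerations: in each case, removing the tail $\{\alpha_{j+1},\ldots,\alpha_r\}$ leaves a sub-diagram of type $A_j$ (respectively $C_j$) in which $\alpha_j$ is braidless. This follows from the list of braidless weights in Section~\ref{sec:braidless and minimal} (every fundamental weight is braidless in type $A$; for the paper's reversed type $C_j$ enumeration, $\omega_j$ sits opposite the double bond and is one of the two braidless weights of type $C_j$).

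Next I would identify $\tau_\ell^{(j)}$. In type $A$, $W_j\cong S_{j+1}$ with $W_{j-1}$ the stabiliser of a point, so $|{}^jW_j|=j+1$ and the elements $e,s_j,s_js_{j-1},\ldots,s_js_{j-1}\cdots s_1$ are $j+1$ distinct minimal coset representatives of strictly increasing length $0,1,\ldots,j$; the longest is $\tau_\ell^{(j)}=s_j\cdots s_1$. In type $C$, $W_j$ is a hyperoctahedral group of order $2^j j!$ for $j\ge 2$, so $|{}^jW_j|=2j$, and the $2j$ elements
\[
e,\ s_j,\ s_js_{j-1},\ \ldots,\ s_j\cdots s_1,\ s_j\cdots s_1s_2,\ \ldots,\ s_j\cdots s_1s_2\cdots s_{j-1}s_j
\]
have strictly increasing lengths $0,1,\ldots,2j-1$ and therefore exhaust ${}^jW_j$; the longest is $\tau_\ell^{(j)}=s_j\cdots s_1\cdots s_j$ of length $2j-1$. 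Concatenating these expressions via the product bijection of Proposition~\ref{prop:main result in weyl} yields the two nice decompositions of $w_0$ displayed in the statement.

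For the parametrisation claim, I would apply Theorem~\ref{thm:main}(2) factor by factor. In both families, the chosen reduced decomposition $\tau_\ell^{(j)}=s_{i_1}\cdots s_{i_N}$ has the property that every two consecutive simple reflections correspond to adjacent nodes of the Dynkin diagram and hence do not commute. Therefore no orthogonal exchange is available, $S_{\omega_j}=\{e\}$, and $\mathcal{C}_{\omega_j}$ collapses to the family of initial segments $\emptyset,\{1\},\{1,2\},\ldots,\{1,\ldots,N\}$. By Theorem~\ref{thm:main}(2), ${}^{\omega_j}W_j$ consists exactly of the prefixes $e,s_{i_1},s_{i_1}s_{i_2},\ldots,s_{i_1}\cdots s_{i_N}$, and a direct inspection of the defining formula shows that this list coincides with $\{\pi_{a_j}:0\le a_j\le j\}$ in type $A$ and with $\{\pi_{a_j}:0\le a_j\le 2j-1\}$ in type $C$. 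Substituting into the product bijection of Proposition~\ref{prop:main result in weyl} produces the asserted map $(a_1,\ldots,a_r)\mapsto \pi_{a_1}\cdots \pi_{a_r}$.

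The main obstacle is the length/enumeration bookkeeping in type $C$: verifying that $s_j\cdots s_1\cdots s_j$ is reduced and that the $2j$ listed elements are distinct minimal coset representatives (equivalently, that they lift the $2j$ cosets of $W_{j-1}\backslash W_j$). Once this is settled, everything else is a direct application of the general machinery of Section~\ref{sec:braidless and minimal}, and in particular no case-by-case analysis beyond the two good enumerations is required.
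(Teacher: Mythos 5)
Your proposal is correct and takes essentially the same route as the paper: the paper's proof simply cites the appendix computations of ${}^iW_i$ (Sections \ref{sec:example type A} and \ref{sec:example type BC}), which rest on exactly your observation that no orthogonal exchanges are possible in the reduced word for $\tau_\ell^{(j)}$, so that Theorem \ref{thm:main}/Corollary \ref{cor:main} identifies ${}^jW_j$ with the set of prefixes, and then invokes Proposition \ref{prop:main result in weyl}. The only difference is that you make the identification of the longest minimal representative explicit via a coset-counting/length argument, a verification the paper leaves implicit in the appendix.
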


\begin{proof}
By the results in Sections \ref{sec:example type A} and \ref{sec:example type BC},
\[
\begin{aligned}
\text{Type }A :{}^i W_i =  &\{\pi_{a_i}:0\leq a_i\leq i\}, \\
\text{Type }C :{}^i W_i = &\{\pi_{a_i}:0\leq a_i\leq 2i-1\}  .
\end{aligned}
\]
The result now follows from Proposition \ref{prop:main result in weyl}.
\end{proof}

\begin{Ex}
To make a connection with the applications later (see Proposition \ref{prop:bijection}), we now include a small rank example. For type $C$, the set $\mathcal{D}_2$ is
\[
\{
(0,0), \ (0,1),\ (0,2),\ (0,3),\ (1,0),\ (1,1),\ (1,2),\ (1,3)
\}.
\]
We now write the simple reflections in $\pi_{a_1}\pi_{a_2}$ in an array so that the $i$-th row from the bottom consists of the simple reflections appearing in $\pi_{a_i}$:
\[
e, \
\begin{pmatrix} s_2 & \ & \  \\ & & \end{pmatrix}, \
\begin{pmatrix} s_2 & s_1 & \ \\ & &  \end{pmatrix}, \
\begin{pmatrix} s_2 & s_1 & s_2 \\ & &  \end{pmatrix},
\]
\[
\begin{pmatrix} \  & \ & \ \\ & s_1 & \end{pmatrix}, \
\begin{pmatrix} s_2 & \  & \ \\ & s_1 & \end{pmatrix}, \
\begin{pmatrix} s_2 & s_1 & \ \\ & s_1 & \end{pmatrix},\
\begin{pmatrix} s_2 & s_1 & s_2 \\ & s_1 & \end{pmatrix}.
\]
The reader can compare this with the nonzero entries in the patterns corresponding to the orbit of the highest weight vector in \cite{BBF-typec-crystal} Figure 2.1. (These patterns are stable patterns in the sense of Definition \ref{def:stable patterns}.)

\end{Ex}

\section{Berenstein-Zelevinsky-Littelmann Patterns and Crystal Graphs}\label{sec:BZL patterns}


\subsection{BZL patterns}
Given a semisimple algebraic group $G$ of rank $r$ and a simple $G$-module $V_\lam$ of highest weight $\lam$, we may associate a crystal graph $\scb_\lam$ to $V_\lam$. That is, there exists a corresponding simple module for the quantum group $U_q(\mathrm{Lie}(G))$ having the associated crystal graph structure. The crystal graph encodes data from the representation $V_\lam$, and can be regarded as a kind of ``enhanced character'' for the representation. We refer the reader to \cite{HK2002} Chapter 4 for the background materials on crystal graphs.

The vertices of $\scb_\lam$ are in bijection with a basis of weight vectors for the highest weight representation $V_\lam$. Given a vertex $v\in\scb_\lam$, let $\mathrm{wt}(v)$ be the weight of the corresponding weight vector. The edges of $\scb_\lambda$ are ``colored'' by $i$ for simple roots $\al_i$. Two vertices $v_1,v_2$ are connected by a (directed) edge from $v_1$ to $v_2$ of color $i$ if the Kashiwara raising operator $e_{i}:=e_{\al_i}$ takes $v_1$ to $v_2$. In this case, $\mathrm{wt}(v_2)=\mathrm{wt}(v_1)+\al_i$. If the vertex $v$ has no outgoing edge of color $i$, we set $e_{i}(v)=0$. The Kashiwara lowering operator is denoted by $f_i$.

\begin{Rem}\label{rem:different description}
Notice that we follow the formulation in \cite{Littelmann98}. This will make the comparison in Section \ref{sec:WGMDS} more natural. The formulation in \cite{BBF-crystal} takes the opposite direction. However, as explained in \cite{BBF-crystal} Proposition 1, they can be tied by the use of the Sch\"utzenberger involution.
\end{Rem}

Littelmann \cite{Littelmann98} gives a combinatorial model for the crystal graph as follows. Fix a reduced decomposition of the longest element $w_0$ of the Weyl group of $G$ into simple reflections:
\[
w_0=s_{i_1}s_{i_2}\cdots s_{i_N}.
\]
Given an element $v$ (i.e. vertex) of the crystal $\scb_\lam$, let $b_1$ be the maximal integer such that $v_1:=e_{{i_1}}^{b_1}(v)\neq 0$. Similarly, let $b_2$ be the maximal integer such that $v_2:=e_{{i_2}}^{b_2}(v_1)\neq 0$. Continuing in this fashion in order of the simple reflections appearing in $w_0$, we obtain a string of non-negative integers
\[
\BZL(v):=(b_1,\cdots, b_N).
\]
This is the so-called Berenstein-Zelevinsky-Littelmann pattern (or BZL-pattern for short). By well-known properties of the crystal graph, we are guaranteed that for any reduced decomposition of the longest element and an arbitrary element $v$ of the crystal, the path $e_{{i_N}}^{b_N}\cdots e_{{i_1}}^{b_1}(v)$ through the crystal always terminates at $v_{\lam}$, corresponding to the highest weight vector of the crystal graph $\scb_\lam$.

Littelmann (\cite{Littelmann98} Proposition 1.5) proves that, for any fixed reduced decomposition, the set of all sequences $(b_1,\cdots, b_N)$ as we vary over all vertices of all highest weight crystals $V_\lam$ associated to $G$ fill out the integer lattice points of a cone in $\br^N$. The inequalities describing the boundary of this cone depend on the choice of reduced decomposition. In general, it is a difficult task to give a set of inequalities.  For the nice decomposition coming from a good enumeration (see Section \ref{sec:good enumeration}), Littelmann shows that the cone is defined by a rather simple set of inequalities (\cite{Littelmann98} Section 4). We describe this for type $A$ and $C$ below, and refer this as the set of cone inequalities.

For any fixed highest weight $\lam$, the set of all sequences $(b_1,\cdots,b_N)$ for the crystal $\scb_\lam$ are integer lattice points of a polytope in $\br^N$. The boundary of the polytope consists of the hyperplanes defined by the cone inequalities independent of $\lam$, together with additional hyperplanes dictated by the choice of $\lam$ (\cite{Littelmann98} Proposition 1.5). We call the latter ones polytope inequalities.

\subsection{Inequalities}

We now describe these two sets of inequalities. We start with the set of polytope inequalities, which depends on the highest weight $\lam$. It is given by
\[
\begin{aligned}
\psi_N&: \ b_N\leq \la \lam, \al_{i_N}^\vee \ra,\\
\psi_{N-1}&:\ b_{N-1}\leq \la \lam-b_N\al_{i_N},\al_{i_{N-1}}^\vee\ra,\\
\cdots\\
\psi_1&:\ b_1\leq \la \lam-b_N\al_{i_N}-\cdots-b_2\al_{i_2},\al_{i_1}^\vee\ra.
\end{aligned}
\]

The set of cone inequalities is more involved. 
We only describe it in types $A$ and $C$ for the nice decompositions given in Proposition \ref{prop:bijection type A C}. The interested reader is referred to \cite{Littelmann98} Section 5--8 for further details.


We arrange a $\BZL$ pattern $\BZL(v)=(b_1, b_2, \cdots, b_N)$ into an array 
as follows. In type $A$, we set
\[
\BZL(v)=
\begin{pmatrix}
\cdots &\cdots & \cdots\\
&b_2&b_3\\
&&b_1\\
\end{pmatrix}=
\begin{pmatrix}
\cdots &\cdots & \cdots\\
 & c_{r-1,r-1} & c_{r-1,r} \\
 & & c_{r,r} \\
\end{pmatrix}.
\]
In type $C$, we set
\[
\BZL(v)=
\begin{pmatrix}
\cdots &\cdots & \cdots&\cdots &\cdots\\
&b_2&b_3&b_4&\\
&&b_1&&\\
\end{pmatrix}=
\begin{pmatrix}
\cdots &\cdots & \cdots&\cdots &\cdots\\
&c_{r-1, r-1}&c_{r-1,r}&\bar{c}_{r-1,r-1}&\\
&&c_{r,r}&&\\
\end{pmatrix}.
\]
In the following, we may use either form, depending on the context.

Note that the corresponding simple reflections used to define $b_i$ are arranged as
\[
\text{Type }A:
\begin{pmatrix}
\cdots &\cdots & \cdots\\
&s_2&s_1\\
&&s_1\\
\end{pmatrix},
\qquad
\text{Type }C:
\begin{pmatrix}
\cdots &\cdots & \cdots&\cdots &\cdots\\
 & s_2 & s_1 & s_2 & \\
 & & s_1 & & \\
\end{pmatrix}
\]

We now describe the set of cone inequalities. In both cases, we require the entries in each row to be non-negative and weakly decreasing.
In other words, for $1\leq i\leq r$,
\[
\begin{aligned}
\text{Type }A: & \  c_{i,i}\geq c_{i,i+1} \geq \cdots \geq c_{i,r} \geq 0,\\
\text{Type }C: & \ c_{i,i}\geq c_{i,i+1}\geq \cdots \geq c_{i,r} \geq  \bar{c}_{i,r-1} \geq  \cdots \geq \bar{c}_{i,i} \geq 0.
\end{aligned}
\]
Notice that each entry $b_i$ appears on the left-hand side of a unique inequality. (This does not hold in general.) We write this inequality as $\phi_i$. For example, in type $A$ we have $b_2=c_{r-1,r-1}$ and $\phi_2$ is $c_{r-1,r-1}\geq c_{r-1,r}. $

\subsection{Decorations}

In \cite{BBF-crystal} page 1088, a decoration rule is introduced for type $A$ patterns in the description of the Weyl group multiple Dirichlet series. A similar decoration is also given for type $C$ patterns in \cite{BBF-typec-crystal} Section 2.4.  We recall it here. Such a decoration for other types necessarily requires a modification. (Thus the definition of stable and unstable patterns below is not clear in general.)


\begin{Def}\label{def:stable and unstable}
An entry $b_i$ is circled (resp. boxed) if the inequality $\phi_i$ (resp. $\psi_i$) is an equality.
\end{Def}

In particular, a zero entry is always circled.

\subsection{Stable and unstable patterns}\label{sec:stable and unstable}

We now define stable and unstable patterns. The meaning of these two terms will become evident in Section \ref{sec:comparison}.

\begin{Def}\label{def:stable patterns}
We call a pattern \textit{stable} if every entry has one and only one decoration. Otherwise, we call it an \textit{unstable} pattern.
\end{Def}

Note that a zero entry is not boxed in a stable pattern. It is easy to see that, for a highest weight representation $V_\lam$ to have a stable pattern, we need to assume $\la\lam,\al_i^\vee\ra>0$ for all $i$. We make this assumption from now on.

We have the following observation. This result is used in Section \ref{sec:comparison}.

\begin{Lem}\label{lem:unstable}
Let $\al_0^\vee$ be the highest coroot. For any $v\in\scb_{\lam}$ and $\BZL(v)=(b_1,\cdots, b_N)$,
\[
\max_{j} b_j\leq \la \lam,\al_0^\vee \ra.
\]
If $b_j=\la\lam,\al_0^\vee\ra$, then $b_j$ is boxed.
\end{Lem}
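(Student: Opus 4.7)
The plan is to chain two inequalities:
\[
b_j \;\leq\; \la \mathrm{wt}(v_j),\alpha_{i_j}^\vee\ra \;\leq\; \la \lam,\alpha_0^\vee\ra,
\]
and to observe that the first of these is literally the polytope inequality $\psi_j$, so that equality of the outer terms forces $\psi_j$ to be an equality, i.e.\ $b_j$ to be boxed.

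First, I would identify the left inequality with $\psi_j$ using standard crystal identities. By maximality of $b_j$, we have $e_{i_j}(v_j)=0$, so $\epsilon_{i_j}(v_j)=0$, and the identity $\varphi_i(u)=\epsilon_i(u)+\la \mathrm{wt}(u),\alpha_i^\vee\ra$ gives $\varphi_{i_j}(v_j)=\la \mathrm{wt}(v_j),\alpha_{i_j}^\vee\ra$. Since $f_{i_j}^{b_j}(v_j)=v_{j-1}\ne 0$, we get $b_j\leq\varphi_{i_j}(v_j)$. Because $v_N=v_\lam$ (the path terminates at the highest weight vector), induction on the weight shift gives $\mathrm{wt}(v_j)=\lam-b_N\alpha_{i_N}-\cdots-b_{j+1}\alpha_{i_{j+1}}$. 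Substituting, the inequality $b_j\leq\la \mathrm{wt}(v_j),\alpha_{i_j}^\vee\ra$ is exactly $\psi_j$, and equality therein is precisely the condition that $b_j$ is boxed.

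Next, I would prove the general weight inequality
\[
\la \mu,\alpha_i^\vee\ra \;\leq\; \la \lam,\alpha_0^\vee\ra
\qquad (\mu \text{ a weight of } V_\lam,\ 1\leq i\leq r).
\]
The set of weights of $V_\lam$ lies in the convex hull of $W\lam$, so write $\mu=\sum_w c_w\, w\lam$ with $c_w\geq 0$ and $\sum c_w=1$. Then $\la\mu,\alpha_i^\vee\ra=\sum_w c_w\la\lam,w^{-1}\alpha_i^\vee\ra$. Each $w^{-1}\alpha_i^\vee$ is a coroot: if it is positive, then $\alpha_0^\vee-w^{-1}\alpha_i^\vee$ is a non-negative integer combination of simple coroots (since $\alpha_0^\vee$ is maximal in the partial order on positive coroots), and dominance of $\lam$ yields $\la\lam,w^{-1}\alpha_i^\vee\ra\leq\la\lam,\alpha_0^\vee\ra$; if it is negative, the pairing is $\leq 0\leq\la\lam,\alpha_0^\vee\ra$ by dominance. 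A convex combination of such terms then satisfies the bound.

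Combining the two steps with $\mu=\mathrm{wt}(v_j)$ yields $b_j\leq\la\lam,\alpha_0^\vee\ra$, and if equality holds then the left inequality in the chain is an equality, so $\psi_j$ holds with equality and $b_j$ is boxed. The only point requiring care is the fact about the highest coroot used in Step 2 (maximality in the partial order on positive coroots), but this is standard and the remaining steps are formal manipulations with the crystal structure.
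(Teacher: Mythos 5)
Your proof is correct and follows essentially the same route as the paper: both reduce the claim to the polytope inequality $\psi_j$ (which you re-derive from the crystal axioms, while the paper quotes it from Littelmann) combined with the uniform bound $\la \mu,\al^\vee\ra\leq\la\lam,\al_0^\vee\ra$ for weights $\mu$ of $V_\lam$, with the boxing conclusion forced by equality in the chain. The only difference is cosmetic: you establish the weight bound via convexity of the weight polytope (writing $\mu$ as a convex combination of $W\lam$), whereas the paper conjugates $\mu$ into the dominant chamber; both hinge on the same maximality property of the highest coroot.
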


\begin{proof}
We first show that if $\mu$ is a weight of the highest representation $V_{\lam}$, and $\al$ is a root, then $\la \mu,\al^\vee \ra \leq \la \lam,\al_0^\vee \ra$.

Let $w$ be a Weyl group element such that $w\mu$ is in the positive Weyl chamber. Then $\la \mu,\al^\vee \ra=\la w\mu,w\al^\vee \ra$. Since $\al_0^\vee - w\al^\vee$ is a non-negative linear combination of simple coroots and $w\mu$ is dominant, we have $\la w\mu, w\al^\vee\ra \leq \la w\mu,\al_0^\vee\ra$. Finally, $\lam-w\mu$ is a non-negative linear combination of simple roots, and $\al_0^\vee$ is the highest coroot, therefore $\la \lam-w\mu,\al_0^\vee \ra \geq 0$ (\cite{Humphreys78} 10.4, Lemma A). This shows that $\la \mu,\al^\vee \ra \leq \la \lam,\al_0^\vee\ra$.

From the polytope inequality $\psi_j$, we have
\begin{equation}\label{eq:polytope ineq estimation}
b_j\leq \left \la \lam-\sum_{k=j+1}^N b_k \al_k,\al_{i_j}^\vee \right \ra \leq \la \lam,\al_0^\vee\ra.
\end{equation}
The last inequality follows from the fact that $\lam-\sum_{k=j+1}^N b_k \al_k$ is in the weight diagram of $V_\lam$.

Now suppose $b_j=\la \lam,\al_0^\vee\ra$. From \eqref{eq:polytope ineq estimation} we know that $\psi_j$ becomes
\[
b_j\leq \la\lam,\al_0^\vee\ra=b_j.
\]
This implies that $b_j$ is boxed.
\end{proof}

Notice that it is still possible to obtain the maximum with the non-highest weight and a non-highest coroot.

\begin{Lem}\label{lem:unstable decoration}
If $b_{j-1}$ and $b_j$ are in the same row, $b_{j-1}$ is circled and $b_{j}$ is boxed, then the pattern is unstable.
\end{Lem}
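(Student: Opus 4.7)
The plan is to exhibit a specific entry of the pattern that carries both decorations simultaneously. Let $q = i_j$, and let $m$ denote the largest index with $m < j$ and $i_m = q$. Since $b_{j-1}$ and $b_j$ share a row (block) $l$ of the nice decomposition and correspond to adjacent simple reflections, either block $l-1$ contains $s_q$ (when $b_j$ lies in the decreasing part of block $l$) or the decreasing part of block $l$ itself contains $s_q$ (when $b_j$ lies in the increasing part, which only happens in type $C$). In either case $m$ is well-defined, and $b_m$ is an actual entry of the pattern.

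The heart of the proof is the computation of $\psi_m$. Setting $\mu = \lam - \sum_{k > j} b_k \al_{i_k}$ and $\nu = \mu - \sum_{m < k \leq j} b_k \al_{i_k}$, the $\psi_j$-equality from $b_j$ being boxed gives $b_j = \langle \mu, \al_q^\vee \rangle$, hence
\[
\langle \nu, \al_q^\vee \rangle = b_j - \sum_{m < k \leq j} b_k \langle \al_{i_k}, \al_q^\vee \rangle.
\]
Only indices $k$ with $i_k$ adjacent to $q$ contribute to the sum, and the structure of the nice decomposition makes these contributions very explicit. After substituting $b_{j-1} = b_j$ from the $\phi_{j-1}$-equality (the circled hypothesis), the sum collapses to $b_j - b_{m+1}$, where $b_{m+1}$ denotes the successor of $b_m$ in its row. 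Thus $\psi_m$ reduces to $b_m \leq b_{m+1}$; in the degenerate case when $b_m$ is the last entry of its row, the same computation yields instead $b_m \leq 0$.

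Combining with the cone inequality $\phi_m \colon b_m \geq b_{m+1}$ (respectively $b_m \geq 0$) forces both $\phi_m$ and $\psi_m$ to be equalities. Hence $b_m$ is simultaneously circled and boxed, so the pattern is unstable.

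The main obstacle I anticipate is the case analysis, especially in type $C$. One must treat separately whether $b_j$ lies in the decreasing or increasing part of its block, and whether the non-simply-laced pairings $\langle \al_1, \al_2^\vee \rangle = -2$ and $\langle \al_2, \al_1^\vee \rangle = -1$ enter (namely for $q \in \{1,2\}$). A mild subtlety is that when $b_j$ is the increasing-part $s_2$ in block $l$ of type $C$, one has $m + 1 = j - 1$, so $b_{m+1}$ and $b_{j-1}$ are the same entry; here the cleanest conclusion is $\psi_m \colon b_m \leq b_j$, which together with $\phi_m \colon b_m \geq b_{j-1} = b_j$ still forces $b_m = b_j = b_{m+1}$.
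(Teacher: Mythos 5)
Your proposal is correct and takes essentially the same route as the paper's proof: your index $m$ is exactly the paper's $j'$ (the previous occurrence of $s_{i_j}$), and in both arguments the two decoration equalities are substituted into the polytope inequality $\psi_m$, which collapses to the reverse of the cone inequality $\phi_m$ and forces $b_m$ to be simultaneously circled and boxed. Your case analysis, including the degenerate case $b_m \leq 0$ and the type $C$ subtlety when $i_{j-1}=1$ (where $\langle \al_1,\al_2^\vee\rangle=-2$ makes the intermediate sum vanish), matches the paper's treatment.
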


\begin{proof}
Let $j'$ be the largest index such that $j'<j$ and $i_{j'}=i_j$. This means that $b_{j'}$ is the last entry before $b_j$ corresponding to the same simple reflection. We explain why such a $j'$ exists. Suppose we are considering the $m$-th row from the bottom. Notice that $b_j$ is not the first entry in the row.  If $i_j=m$, then we choose the first entry in this row. If $i_j<m$, then then there is an entry in a row below corresponding to the same simple reflection. Note that in both cases we have $j'<j-1$.

We have $\la \al_{i_{j-1}},\al_{i_j}^\vee\ra = -1$ unless the type is $C$ and $i_{j-1}=1$. We treat this special case first. In this case $j'=j-2$. As $b_{j-1}$ is circled and $b_{j}$ is boxed,
\begin{equation}\label{eq:property of stable I}
b_{j-1}=b_{j}= \left\la \lam-\sum_{k=j+1}^N b_k\al_{i_k},\al_{i_j}^\vee\right \ra.
\end{equation}
Then the polytope inequality $\psi_{j'}$ for $b_{j'}$ is
\[
\begin{aligned}
b_{j'}=b_{j-2} & \leq \left\la \lam-\sum_{k=j-1}^N b_k\al_{i_k},\al_{i_{j'}}^\vee \right\ra \\
=& \left\la \lam-\sum_{k=j+1}^N b_k\al_{i_k},\al_{i_j}^\vee \right\ra - b_j\la \al_{i_j}, \al_{i_j}^\vee\ra - b_{j-1} \la \al_{i_{j-1}},\al_{i_j}^\vee \ra \\
=&b_j- 2b_j -b_{j-1}\la \al_1,\al_2^\vee \ra \\
=&b_j - 2 b_j + 2b_j=b_j.
\end{aligned}
\]
Here $b_{j-2}$ and $b_j$ are in the same row. This means both $\phi_{j-2}$ and $\psi_{j-2}$ hold with equality. Therefore the pattern is unstable.

In all other cases, we have $\la \al_{i_{j-1}}, \al_{i_j}^\vee \ra =-1$ and therefore substituting \eqref{eq:property of stable I} into the inequality $\psi_{j'}$ as above implies
\begin{equation}\label{eq:property of stable}
\begin{aligned}
b_{j'}&\leq \left\la \lam-\sum_{k=j'+1}^N b_k\al_{i_k},\al_{i_{j'}}^\vee \right\ra\\
&=\left\la \lam-\sum_{k=j+1}^N b_k\al_{i_k},\al_{i_j}^\vee \right\ra - b_j \la \al_{i_j},\al_{i_j}^\vee \ra - b_{j-1} \la \al_{i_{j-1}},\al_{i_j}^\vee\ra  - \left\la\sum_{k=j'+1}^{j-2} b_k\al_{i_k},\al_{i_j}^\vee \right \ra\\
&=b_j-2b_j+b_{j-1}- \left\la \sum_{k=j'+1}^{j-2} b_k\al_{i_k},\al_{i_j}^\vee \right\ra\\
&=- \sum_{k=j'+1}^{j-2} b_k \la \al_{i_k},\al_{i_j}^\vee \ra.
\end{aligned}
\end{equation}

It follows from the choice of $j'$ and the reduced long words in type $A$ and $C$ that there is at most one $j'<k<j-1$ such that $\la \al_{i_k},\al_{i_{j}}^\vee \ra \neq 0$. First, if the sum in the last line is $0$, then $b_{j'}$ is both circled and boxed, and the pattern is indeed unstable. Otherwise, the only surviving term is $-b_{k'}\la \al_{i_{k'}},\al_{i_j}^\vee\ra=b_{k'}$ for $b_{k'}$ in the same row as $b_{j'}$. (This can be verified directly for type $A$ and $C$, but does not naively generalize to other types.) In this case, the pattern is again unstable.
\end{proof}

Thus, in each row of a stable pattern, the decoration is of the following form
\[
\square \ \square \  \cdots \ \square \ \bigcirc \ \cdots \ \ \bigcirc
\]
where the circled entries are zero.

Let $v\in \scb_\lam$ such that $\BZL(v)$ is a stable pattern. We now define a Weyl group element using Proposition \ref{prop:bijection type A C} and the decoration of $\BZL(v)$. Define $\mathrm{sign}(v)=(e_1,\cdots, e_N)\in \{0,1\}^N$ where $e_i=1$ if $b_i$ is boxed.
Let $St_\lam$ be the set of stable patterns. Define a map $St_\lam\to W$ by
\[
\BZL(v)\mapsto \mathrm{sign}(v)=(e_1,\cdots,e_N)\mapsto w_v:= s_{i_1}^{e_1} s_{i_2}^{e_2} \cdots s_{i_N}^{e_N}.
\]
We also write
\begin{equation}\label{eq:reduced of w}
w_v=s_{i'_1}\cdots s_{i'_\ell}
\end{equation}
to keep track of the simple reflections that actually appear in $w_v$.  By Proposition \ref{prop:bijection type A C}, this is a reduced expression of $w_v$ and the map $St_\lam \to W$ is injective. We now show that this map is indeed surjective. We also show that the weight of $v$ is $w_v\lam$; in particular, this shows that $\mathrm{wt}(v)$ lies in the orbit of the highest weight $\lam$.



\begin{Prop}\label{prop:bijection} \
\begin{enumerate}
\item The map $St_\lam\to W$ is surjective.
\item Let $\BZL(v)$ be a stable pattern. Then $\mathrm{wt}(v)=w_v\lam$.
\end{enumerate}
\end{Prop}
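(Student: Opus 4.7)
The plan is to tackle part (2) first and then leverage it for part (1).

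For part (2), I would set $v_0 = v$ and $v_k = e_{i_k}^{b_k}(v_{k-1})$, so that $v_N = v_\lam$ and $\mathrm{wt}(v_k) = \lam - \sum_{j > k} b_j \al_{i_j}$. The core claim to establish by downward induction on $k$ is
\[
\mathrm{wt}(v_k) = s_{i_{k+1}}^{e_{k+1}} \cdots s_{i_N}^{e_N} \lam,
\]
whose $k=0$ case gives $\mathrm{wt}(v) = w_v \lam$. The base case $k=N$ is trivial. The inductive step reduces to the identity $b_k = e_k \la \mathrm{wt}(v_k), \al_{i_k}^\vee \ra$: when $e_k = 1$ this is precisely the polytope inequality $\psi_k$ read with equality, which is the definition of boxed; when $e_k = 0$ we need $b_k = 0$, which follows from Lemma \ref{lem:unstable decoration} together with the $\phi$-inequalities along a row. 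Indeed, Lemma \ref{lem:unstable decoration} forces each row of a stable pattern to have the shape (boxes)(circles), the rightmost circle is $0$ by its $\phi_j$, and chained $\phi$-equalities propagate $0$ leftward through the circled portion of the row.

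For part (1), given $w \in W$, Proposition \ref{prop:bijection type A C} supplies a unique factorization $w = \pi_{a_1} \cdots \pi_{a_r}$ and hence a target sign vector $(e_1, \ldots, e_N)$ in which the $k$-th row from the bottom has its leftmost $a_k$ entries marked boxed and the remainder circled. I would construct a candidate pattern by downward recursion on $j$: set $b_j := 0$ when $e_j = 0$, and
\[
b_j := \left\la \lam - \sum_{k > j} b_k \al_{i_k}, \al_{i_j}^\vee \right\ra
\]
when $e_j = 1$. After verifying that this $(b_j)$ satisfies the cone inequalities, the polytope inequalities, and that its natural decoration coincides with the prescribed sign vector, Littelmann's cone description ensures $(b_j)$ is $\BZL(v)$ for a unique $v \in \scb_\lam$, and part (2) then identifies $\mathrm{wt}(v) = w \lam$, completing surjectivity.

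The main obstacle will be verifying the cone inequalities together with the stability of the constructed pattern. By iterating the identity from part (2) backwards, the boxed values can be rewritten as $b_j = \la w_{>j}\lam, \al_{i_j}^\vee \ra$, where $w_{>j} := s_{i_{j+1}}^{e_{j+1}} \cdots s_{i_N}^{e_N}$, so that non-negativity and the row-wise weak decrease reduce to statements about pairings $\la w_{>j}\lam, \al_{i_j}^\vee \ra$ controlled---via Lemma \ref{lem:bump lemma} and Lemma \ref{lem:springer lemma}---by the inversion set of $w_{>j}$. These can be discharged by a direct case analysis in types $A$ and $C$ using the explicit form of each $\pi_{a_i}$ from Proposition \ref{prop:bijection type A C}. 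The final stability check---that every boxed entry has $\phi_j$ strict and every circled entry has $\psi_j$ strict---then follows from $\lam$ being strictly dominant together with the same positivity analysis.
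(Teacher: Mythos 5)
Your overall architecture matches the paper's: the candidate pattern attached to $w\in W$ has the same nonzero entries (each defined as the upper bound of its polytope inequality, read off the sign vector supplied by Proposition \ref{prop:bijection type A C}), and your identity $b_j=\la w_{>j}\lam,\al_{i_j}^\vee\ra$ is exactly the paper's \eqref{eq:bj value}. There are two real differences. First, you prove part (2) directly for an \emph{arbitrary} stable pattern by downward induction (boxed entries saturate $\psi_j$; circled entries are forced to vanish by Lemma \ref{lem:unstable decoration} and the chained $\phi$-inequalities), whereas the paper establishes the weight formula only for the patterns it constructs in part (1) and then deduces (2) from the previously established injectivity of $St_\lam\to W$; your order is cleaner and decouples (2) from surjectivity at no extra cost. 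Second --- and this is where your proposal is genuinely incomplete --- you certify that the candidate array is $\BZL(v)$ for some $v\in\scb_\lam$ by appealing to Littelmann's cone-and-polytope characterization, which obliges you to verify the row-monotonicity cone inequalities and the polytope inequalities at the circled positions (equivalently $\la w_{>j}\lam,\al_{i_j}^\vee\ra\geq 0$ there); you defer both to a ``direct case analysis in types $A$ and $C$'' that you never carry out, and which is precisely the kind of explicit type-by-type computation the paper is written to avoid. The paper sidesteps this entirely by realizing the vertex as $f_{i'_1}^{b'_1}\cdots f_{i'_\ell}^{b'_\ell}(v_\lam)$, a path through extremal (Weyl-orbit) weight vectors, so the array is the string pattern of an honest crystal vertex by construction and no cone inequality ever needs checking; the only remaining verification is stability, i.e.\ that no entry is doubly decorated, which follows from strict dominance of $\lam$ because $\la\lam,\tilde w^{-1}(\al_{i_j}^\vee)\ra\neq 0$ for any $\tilde w\in W$. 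If you keep your route you must actually supply the cone-inequality verification; otherwise, adopt the paper's lowering-operator realization and your argument closes.
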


\begin{proof}
Given an element $w\in W$, we explain how to reconstruct a stable pattern $\BZL(v)$ such that $w_v=w$ and $\mathrm{wt}(v)=w_v\lam$. As the decoration for the pattern comes from the decomposition of $w$, we only need to specify the nonzero entries. We write $w=s_{i'_1}\cdots s_{i'_\ell}$ as in \eqref{eq:reduced of w}. We reindex the nonzero entries as
 \[
 (b'_1,\cdots,b'_\ell)
 \]
 where $\ell$ is the length of $w$.

Each nonzero entry must be the upper bound of its polytope inequality. Therefore, $(b_1',\cdots, b'_\ell)$ can be defined inductively via
\[
\begin{aligned}
b'_\ell&=\la \lam,\al_{i'_\ell}^\vee\ra\\
b'_{\ell-1}&= \la \lam-b'_{\ell }\al_{i'_\ell},\al_{i'_{\ell-1}}^\vee \ra\\
\cdots & \\
b'_1&=\la \lam-\sum_{k=2}^\ell b'_k\al_{i'_k},\al_{i'_1}^\vee\ra.
\end{aligned}
\]
Now every entry has at least one decoration. Adding zero entries appropriately gives the desired pattern $\BZL(v_w)$. Here $v_w$ is the corresponding vertex in $\scb_\lam$. To verify this is a stable pattern, we need to show that every entry has exactly one decoration.

Before proving that, we calculate $\mathrm{wt}(v_w)$. By properties of crystal graph, one can find $v_w$ from $v_\lam$ by the following path
\[
v_{\lam}, \ f_{{i_N}}^{b_N}(v_{\lam}), \ f_{{i_{N-1}}}^{b_{N-1}}f_{{{i_N}}}^{b_N}(v_{\lam}), \ \cdots
\]
If we would like to ignore the zeros in $\BZL(v_w)$, we may write $w=s_{i'_1}\cdots s_{i'_\ell}$ and one can find $v_w$ by the following path
\[
v_{\lam}, \ f_{{i'_\ell}}^{b'_\ell}(v_{\lam}), \ f_{{i'_{\ell-1}}}^{b'_{\ell-1}}f_{{i'_\ell}}^{b'_\ell}(v_{\lam}), \ \cdots
\]
We claim that
\begin{equation}\label{eq:bj value}
b'_j=\la s_{i'_{j+1}}\cdots s_{i'_\ell}(\lam),\al_{i'_j}^\vee\ra, \qquad \mathrm{wt}(f_{i'_{j}}^{b'_{j}}\cdots f_{{s_{i'_\ell}}}^{b'_\ell}(v_{\lam}))=s_{i'_{j}}\cdots s_{i'_\ell}(\lam).
\end{equation}

We argue by reduction on $j$. If $j=\ell$, then $b'_\ell=\la \lam, \al_{i'_\ell}^\vee \ra$ and
\[
\mathrm{wt}(f_{{i'_\ell}}^{b'_\ell}(v_{\lam}))=\lam - b'_\ell \al_{i'_\ell}=s_{i'_\ell}(\lam).
\]
Now assume that the result is true for $j+1,\cdots, \ell$ and we prove it for $j$. 
For a root $\al$,
\begin{equation}\label{eq:calculation long}
\begin{aligned}
&\la s_{i'_{j+1}}\cdots s_{i'_\ell}(\lam),\al^\vee\ra \\
=&\la s_{i'_{j+2}}\cdots s_{i'_\ell}(\lam),  s_{i'_{j+1}}(\al^\vee)\ra\\
=&\la s_{i'_{j+2}}\cdots s_{i'_\ell}(\lam),  \al^\vee-\la \al^\vee ,\al_{i'_{j+1}}\ra \al_{i'_{j+1}}^\vee\ra\\
=&\la s_{i'_{j+2}}\cdots s_{i'_\ell}(\lam),  \al^\vee\ra - b'_{j+1}\la \al^\vee ,\al'_{i_{j+1}}\ra \\
=&\la s_{i'_{j+3}}\cdots s_{i'_\ell}(\lam),  \al^\vee\ra - b'_{j+1}\la \al^\vee ,\al_{i'_{j+1}}\ra - b'_{j+2}\la \al^\vee ,\al_{i'_{j+2}}\ra \\
&\cdots \\
=&\left\la \lam - \sum_{k=j+1}^\ell b'_k\al_{i'_k},\al^\vee \right\ra.
\end{aligned}
\end{equation}
Therefore, $\la s_{i'_{j+1}}\cdots s_{i'_\ell}(\lam),\al_{i'_j}^\vee\ra=b'_{j}$.
The weight of $f_{i'_{j}}^{b'_j}\cdots f_{i'_\ell}^{b'_\ell}(v_{\lam})$ is
\[
s_{i'_{j+1}}\cdots s_{i'_\ell}(\lam)- b'_j\al_{i'_j}=s_{i'_{j+1}}\cdots s_{i'_\ell}(\lam)- \la s_{i'_{j+1}}\cdots s_{i'_\ell}(\lam),\al_{i'_j}^\vee\ra \al_{i'_j}=s_{i'_{j}}\cdots s_{i'_\ell}(\lam).
\]
This finishes the induction step.

We now verify that $\BZL(v_w)$ is a stable pattern. Suppose there is an entry $b_j$ such that it is boxed and circled. If it is a zero entry, then
\[
0=b_j=\left\la\lam -\sum_{k=j+1}^N b_k\al_{i_k},\al_{i_j}^\vee\right\ra.
\]
However, by \eqref{eq:calculation long}, the right-hand side is $\la \tilde w(\lam),\al_{i_j}^\vee\ra=\la \lam , \tilde w^{-1}(\al_{i_j}^\vee)\ra$ for some Weyl group element $\tilde w$. This cannot be zero as we assume $\la \lam,\al_i^\vee\ra>0$ for all simple coroots (recall that any coroot is either a non-negative or non-positive linear combination of simple coroots). The case $b_j\neq 0$ can be analyzed analogously. This proves part (1). Part (2) now follows from \eqref{eq:bj value} and the fact that $St_\lam \to W$ is a bijection.
\end{proof}

\section{Weyl Group Multiple Dirichlet Series}\label{sec:WGMDS}

\subsection{Definition}

Given an isotropic subgroup $\Omega$ of $F_S^\times$, let $\scm(\Omega^r)$ be the space of functions $\Psi:(F_S^\times)^r\to \bc$ that satisfy the transformation property
\[
\Psi(\boldsymbol{\epsilon}\boldsymbol{c})=
\left(
\prod_{i=1}^r (\epsilon_i,c_i)_S^{\lVert \al_i \rVert^2} \prod_{i< j}(\epsilon_i,c_j)_S^{2\la \al_i,\al_j\ra}
\right)
\Psi(\boldsymbol{c})
\]
for all $\boldsymbol{\epsilon}=(\epsilon_1,\cdots,\epsilon_r)\in\Omega^r$ and all $\boldsymbol{c}=(c_1,\cdots,c_r)\in (F_S^\times )^r$.

Given a root system $\Phi$ of fixed rank $r$, and integer $n\geq 1$, $\boldsymbol{m}\in\sco_S^r$, and $\Psi\in\scm(\Omega^r)$, we consider a function of $r$ complex variables $\boldsymbol{s}=(s_1,\cdots,s_r)\in\bc^r$ of the form
\[
Z_\Psi(s_1,\cdots, s_r;m_1,\cdots, m_r)=Z_\Psi(\boldsymbol{s};\boldsymbol{m})=\sum_{\boldsymbol{c}
=(c_1,\cdots,c_r)\in(\sco_S/\sco_S^\times)^r} \dfrac{H^{(n)}(\boldsymbol{c};\boldsymbol{m}) \Psi(\boldsymbol{c})}{|c_1|^{2s_1}\cdots |c_r|^{2s_r}}.
\]

The function $H(\boldsymbol{c};\boldsymbol{m})=H^{(n)}(\boldsymbol{c};\boldsymbol{m})$ carries the main arithmetic content. It is not defined as a multiplicative function, but rather a ``twisted multiplicative'' function. This means that for $S$-integer vectors $\boldsymbol{c},\boldsymbol{c}'\in(\sco_S/\sco_S^\times)^r$ with $\gcd(c_1\cdots c_r,c_1'\cdots c_r')=1$,
\[
H(c_1c_1',\cdots, c_rc_r';\boldsymbol{m})=\mu(\boldsymbol{c},\boldsymbol{c}') H(\boldsymbol{c};\boldsymbol{m})H(\boldsymbol{c}';\boldsymbol{m}),
\]
where $\mu(\boldsymbol{c},\boldsymbol{c}')$ is an $n$-th root of unity given by
\[
\mu(\boldsymbol{c},\boldsymbol{c}')=
\prod_{i=1}^r
\left(\dfrac{c_i}{c_i'}\right)_n^{\lVert \al_i\rVert^2}
\left(\dfrac{c_i'}{c_i}\right)_n^{\lVert \al_i\rVert^2}
\prod_{i<j}
\left(\dfrac{c_i}{c_j'}\right)_n^{2\la \al_i,\al_j\ra}
\left(\dfrac{c_i'}{c_j}\right)_n^{2\la \al_i,\al_j\ra}.
\]

The transformation property of functions in $\scm(\Omega^r)$ implies that
\[
H(\boldsymbol{\epsilon c};\boldsymbol{m})\Psi(\boldsymbol{\epsilon c})
=H(\boldsymbol{c};\boldsymbol{m})\Psi(\boldsymbol{ c})
\qquad
\text{ for all }\boldsymbol{\epsilon}\in\sco_S^r,\boldsymbol{c},\boldsymbol{m}\in (F_S^\times)^r.
\]
The proof requires the $n$-th power reciprocity law (\cite{BBF-untwisted-stable} Lemma 1.2).

Now, given any $\boldsymbol{m},\boldsymbol{m}',\boldsymbol{c}\in\sco_S^r$ with $\gcd(m_1'\cdots m_r',c_1\cdots c_r)=1$, let
\[
H(\boldsymbol{c};m_1m_1',\cdots, m_rm_r')
=\prod_{i=1}^r
\left( \dfrac{m_i'}{c_i} \right)_n^{-\lVert \al_i\rVert^2}
H(\boldsymbol{c};\boldsymbol{m}).
\]

Thus, it is enough to specify the coefficients $H(p^{\boldsymbol{k}};p^{\boldsymbol{l}}):=H(p^{k_1},\cdots,p^{k_r};p^{l_1},\cdots,p^{l_r})$ for any fixed prime $p$ with $\boldsymbol{k}=(k_1,\cdots,k_r)$, $\boldsymbol{l}=(l_1,\cdots,l_r)$, $k_i=\mathrm{ord}_p(c_i)$ and $l_i=\mathrm{ord}_p(m_i)$ in order to completely determine $H(\boldsymbol{c},\boldsymbol{m})$ for any pair of $S$-integer vectors $\boldsymbol{m}$ and $\boldsymbol{c}$.

The goal in the theory of Weyl group multiple Dirichlet series is to find $H(p^{\boldsymbol{k}};p^{\boldsymbol{l}})$ so that $Z_\Psi(\boldsymbol{s};\boldsymbol{m})$ admits meromorphic continuation to $\bc^r$ and satisfies a group of functional equations which is isomorphic to $W$.

At mentioned in the introduction, there are at least five ways to define the prime-power contribution and they are expected to agree.
In this section, we compare the Lie-theoretic description and the crystal description for type $A$ and type $C$ ($n$ odd). The comparison is given in \cite{BBF-twisted} Section 8 for type A, and \cite{BBF-typec} Section 4 for type $C$ and $n$ odd. In both cases, the proofs rely on explicit realization of the root systems in $\br^r$.

In \cite{BBF-typec} Section 4.1, the authors ask if one can compare the Lie-theoretic description and the crystal graph description in the stable range without the need to refer to an explicit embedding of the underlying root system in an Euclidean space. Using Proposition \ref{prop:bijection}, we will present such a coordinate-free proof in this section.

\subsection{Twisted Weyl group multiple Dirichlet series: the stable case}\label{sec:stable case}
In this section, we recall the definition of twisted Weyl group multiple Dirichlet series in the stable range in \cite{BBF-twisted}.

Fix non-negative integers $l_1,\cdots, l_r$ and let $\lam=\sum l_i \omega_i$ be the corresponding weight.

\begin{SA}
\textit{The positive integer $n$ satisfies the following property. Let $\al_0$ be the highest root and let $\al_0^\vee=\sum_{i=1}^r t_i \al_i^\vee$ be the highest coroot in the partial ordering. Then
\[
n\geq \gcd(n,\lVert\al_0\rVert^2) \cdot d_\lam(\al_0)=\gcd(n,\lVert\al_0\rVert^2) \cdot \sum_{i=1}^r t_i(l_i+1),
\]
where $d_\lam(\al_0)=\la \lam+\rho, \al_0^\vee \ra$ as defined in \eqref{eq:d lambda}.
}
\end{SA}

\begin{Rem}
Notice that there is misprint in the Stability Assumption in \cite{BBF-twisted} Section 3, \cite{BBF-typec} Section 4, and \cite{FZ-orthogonal} Section 9. One should use $t_i$ from the highest coroot instead of the ones from the highest root.
\end{Rem}

\begin{Lem}[\cite{BBF-twisted}, Lemma 1]
Let $w\in W$.
\begin{enumerate}
\item There are non-negative integers $k_i$ such that
\begin{equation}\label{eq:definition of ki}
\rho+\lam-w(\rho+\lam)=\sum_{i=1}^r k_i \al_i.
\end{equation}
\item If $w,w'\in W$ such that $\rho+\lam-w(\rho+\lam)=\rho+\lam-w'(\rho+\lam)$ then $w=w'$.
\end{enumerate}
\end{Lem}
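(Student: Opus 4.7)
The plan is to observe that $\rho + \lam$ is a dominant regular weight: since $\lam = \sum l_i \omega_i$ with $l_i \geq 0$ and $\rho = \sum \omega_i$, we have $\la \rho+\lam, \al_i^\vee\ra = 1 + l_i \geq 1$ for every simple coroot. Both parts then follow from standard properties of dominant (regular) weights under the Weyl group action.

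For part (1), I would prove the general fact that if $\mu$ is a dominant weight then $\mu - w\mu \in \bz_{\geq 0}\Phi^+$ for every $w \in W$, and apply it with $\mu = \rho+\lam$. The proof is by induction on $\ell(w)$: the case $w = e$ is trivial; otherwise write $w = s_i v$ with $\ell(w) = \ell(v) + 1$, so that $v^{-1}(\al_i) \in \Phi^+$. Then
\[
\mu - w\mu = (\mu - v\mu) + \la v\mu, \al_i^\vee\ra \al_i.
\]
The first summand lies in $\bz_{\geq 0}\Phi^+$ by induction. For the second, using Lemma~\ref{lem:springer lemma} we have $\la v\mu, \al_i^\vee\ra = \la \mu, v^{-1}(\al_i^\vee)\ra = \la \mu, (v^{-1}\al_i)^\vee\ra \geq 0$, since $v^{-1}\al_i$ is a positive root and $\mu$ is dominant. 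This yields part (1).

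For part (2), the hypothesis gives $w(\rho+\lam) = w'(\rho+\lam)$, so $w^{-1}w'$ fixes $\rho+\lam$. Since $\rho+\lam$ is strictly dominant (a regular element of the closed positive Weyl chamber), its stabilizer in $W$ is trivial, hence $w^{-1}w' = e$ and $w = w'$. I do not expect any genuine obstacle here; the only subtlety is getting the induction step in (1) to produce \emph{non-negative} coefficients, which is handled precisely by choosing the induction on the left ($w = s_i v$ with $\ell(w)>\ell(v)$) rather than on the right, so that the scalar $\la v\mu, \al_i^\vee\ra$ is guaranteed to be non-negative.
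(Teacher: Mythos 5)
The paper offers no proof of this lemma---it is quoted directly from \cite{BBF-twisted}, Lemma 1---so there is no internal argument to compare against. Your proof is correct and is the standard one: $\rho+\lam$ is strictly dominant since $\la\rho+\lam,\al_i^\vee\ra=1+l_i\geq 1$, part (1) follows by induction on $\ell(w)$ exactly as you set it up (with the left decomposition $w=s_iv$ ensuring $\la v\mu,\al_i^\vee\ra\geq 0$), and part (2) follows because the stabilizer of a regular dominant weight in $W$ is trivial.
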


Recall that by twisted multiplicativity, it remains to describe $H(p^{\boldsymbol{k}};p^{\boldsymbol{l}})$ for any fixed prime $p$. For any given $(k_1,\cdots, k_r)$, these coefficients are defined to be zero unless there exists $w\in W$  such that \eqref{eq:definition of ki} holds. In this case,  we  define
\[
H(p^{\boldsymbol{k}};p^{\boldsymbol{l}})=\prod_{\al\in\Phi(w)}g_{\lVert \al\rVert^2}(p^{d_{\lam}(\al)-1},p^{d_{\lam}(\al)}).
\]

\subsection{Crystal graph description}
The prime-power construction $H(p^{\boldsymbol{k}};p^{\boldsymbol{l}})$ can also be described as a sum of $n$-th order Gauss sum over the crystal graph $\scb_{\lam+\rho}$:
\[
H(p^{\boldsymbol{k}};p^{\boldsymbol{l}})=\sum_{\substack{v\in\scb_{\lam+\rho} \\ \mathrm{wt}(v)=\mu}} G(v).
\]
Here the relation between $(k_1,\cdots, k_r)$ and $\mu$ is
\[
\rho+\lam-w(\mu)=\sum_{i=1}^r k_i \al_i.
\]
The definition here depends on the choice of a reduced decomposition of $w_0$. Here we choose the nice decompositions in Proposition \ref{prop:bijection type A C}.

A definition of $G(v)$ is already given for type $A$ (\cite{BBF-crystal}) and type  $C$ with odd $n$ (\cite{FZ-orthogonal} (34)). We remark that in these two papers the primer-power contribution is defined via the Gelfand-Tsetlin patterns. This description is equivalent to the crystal graph description; see \cite{BBF-crystal} and \cite{BBF-typec-crystal}.  As explained in Remark \ref{rem:different description}, we take slightly different descriptions to make the comparison more natural.

The definition of $G(v)$ is given as follows:
\begin{equation}\label{eq:crystal contribution}
G(v)=\prod_{b_j\in \BZL(v)}
\begin{cases}
q^{b_j} & \text{ if } b_j \text{ is circled but not boxed}, \\
g_{\lVert \al_{i_j} \rVert^2}(p^{b_j-1},p^{b_j}) & \text{ if } b_j \text{ is boxed but not circled}, \\
q^{b_j}(1-q^{-1})& \text{ if } b_j \text{ is neither circled nor boxed and }n\mid b_j,\\
0 & \text{ otherwise}.\\
\end{cases}
\end{equation}
Here, the index $i_j$ is the one appearing in the nice decomposition of $w_0$.

It is still not clear how to define $G(v)$ in general. For Eisenstein series constructed from an automorphic representation on a braidless maximal parabolic subgroup, an analogous calculation has been carried out in \cite{BF-GAFA}.

\subsection{Comparison}\label{sec:comparison}

In this section, we only work with roots system of type $A$ or type $C$ with odd $n$. We show that in these two cases, the crystal graph description in the stable range agrees with the Lie-theoretic description.  We write $H_{\BZL}(p^{\boldsymbol{k}};p^{\boldsymbol{l}})$ for the crystal graph description, and $H_{St}(p^{\boldsymbol{k}};p^{\boldsymbol{l}})$ for the Lie-theoretic description.

\begin{Thm}
Let $\Phi$ be a root system of type $A$ or $C$. Let $n$ be a positive integer satisfying the stability assumption. We also require $n$ to be odd if $\Phi$ is of type $C$.
\begin{enumerate}
\item If $v\in\scb_{\rho+\lam}$ such that $\BZL(v)$ is an unstable pattern, then $G(v)=0$.
\item Let $w\in W$. Let $\boldsymbol{k}$ be defined in \eqref{eq:definition of ki}. Then
\[
H_{\BZL}(p^{\boldsymbol{k}};p^{\boldsymbol{l}})=H_{St}(p^{\boldsymbol{k}};p^{\boldsymbol{l}}).
\]
\end{enumerate}
\end{Thm}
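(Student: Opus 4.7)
My plan is to reduce both parts to consequences of the bijection between stable patterns and the Weyl group (Proposition \ref{prop:bijection}) together with the shape of decorations from Lemma \ref{lem:unstable decoration}.

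For part (1), an unstable pattern must contain some entry $b_j$ that is either simultaneously boxed and circled, or has no decoration at all. In the first case, the ``otherwise'' clause in the definition \eqref{eq:crystal contribution} of $G(v)$ forces that factor to vanish. In the second case, $b_j > 0$ (since zero entries are always circled), so a nonzero contribution would require $n \mid b_j$ and hence $b_j \geq n$. Combining the Stability Assumption with Lemma \ref{lem:unstable} applied to the crystal $\scb_{\lam+\rho}$ yields $b_j \leq d_\lam(\al_0) \leq n/\gcd(n,\lVert \al_0 \rVert^2) \leq n$, and equality throughout would force $b_j = d_\lam(\al_0)$, in which case Lemma \ref{lem:unstable} asserts that $b_j$ is boxed, contradicting our assumption. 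Either way, $G(v) = 0$.

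For part (2), by part (1) only stable patterns contribute to $H_{\BZL}(p^{\boldsymbol{k}};p^{\boldsymbol{l}})$. By Proposition \ref{prop:bijection}, the stable patterns in $\scb_{\lam+\rho}$ are in bijection with $W$, and the stable pattern $v_w$ attached to $w \in W$ satisfies $\mathrm{wt}(v_w) = w(\lam+\rho)$. Given $\boldsymbol{k}$ corresponding to some $w$ via \eqref{eq:definition of ki}, the unique stable $v$ with $\mathrm{wt}(v) = w(\lam+\rho)$ is therefore $v_w$, so $H_{\BZL}(p^{\boldsymbol{k}};p^{\boldsymbol{l}}) = G(v_w)$.

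It remains to match $G(v_w)$ with $H_{St}(p^{\boldsymbol{k}};p^{\boldsymbol{l}})$ factor by factor. In a stable pattern, circled entries are zero and yield factor $q^{0} = 1$, so only boxed entries contribute nontrivial Gauss sums $g_{\lVert \al_{i_j} \rVert^2}(p^{b_j-1}, p^{b_j})$. Writing $w = s_{i'_1} \cdots s_{i'_\ell}$ as in \eqref{eq:reduced of w} and letting $b'_1, \ldots, b'_\ell$ denote the boxed entries, the construction in Proposition \ref{prop:bijection} together with \eqref{eq:calculation long} gives $b'_j = \la s_{i'_{j+1}} \cdots s_{i'_\ell}(\lam+\rho), \al_{i'_j}^\vee \ra$. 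On the Lie-theoretic side, Lemma \ref{lem:bump lemma} describes $\Phi(w) = \{\be_\ell, \ldots, \be_1\}$ with $\be_j := s_{i'_\ell} \cdots s_{i'_{j+1}}(\al_{i'_j})$, and then Lemma \ref{lem:springer lemma} together with Weyl-invariance of the pairing yields $d_\lam(\be_j) = b'_j$ and $\lVert \be_j \rVert^2 = \lVert \al_{i'_j} \rVert^2$. Hence each boxed entry matches exactly one root in $\Phi(w)$ with identical Gauss-sum contribution, proving $G(v_w) = H_{St}(p^{\boldsymbol{k}};p^{\boldsymbol{l}})$. The main obstacle is the careful interplay in part (1) between the Stability Assumption, the bound of Lemma \ref{lem:unstable}, and the shape of decorations from Lemma \ref{lem:unstable decoration}, particularly in the type $C$ case with $n$ odd where the $\gcd(n, \lVert \al_0 \rVert^2)$ factor must be handled; the subsequent matching in part (2) is then straightforward provided one uses the nice decomposition of Proposition \ref{prop:bijection type A C} so that the boxed entries align correctly with $\Phi(w)$.
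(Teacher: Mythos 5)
Your proposal is correct and follows essentially the same route as the paper: part (1) via the case split into doubly-decorated versus undecorated entries, eliminating the latter by combining Lemma \ref{lem:unstable} on the crystal $\scb_{\rho+\lam}$ with the Stability Assumption, and part (2) via the bijection of Proposition \ref{prop:bijection}, the identification $b'_j = d_\lam(\beta_j)$ through \eqref{eq:bj value} and Lemmas \ref{lem:bump lemma} and \ref{lem:springer lemma}, and the term-by-term matching of Gauss sums. No gaps.
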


\begin{proof}
Suppose $\BZL(v)$ is an unstable pattern. If there is an entry which is both circled and boxed, then $G(v)=0$ from the fourth case in \eqref{eq:crystal contribution}. If one of the entries $b_j$ is neither circled nor boxed, then by Lemma  \ref{lem:unstable},  $b_j<\la \lam,\al_0^\vee\ra\leq n$. In other words, $n\nmid b_j$. Again, by the fourth case in \eqref{eq:crystal contribution}, we conclude that $G(v)=0$.

We are now left with the stable patterns. By Proposition \ref{prop:bijection}, they are in bijection with $W$.  Given $w\in W$, let $\BZL(v_w)$ be the corresponding stable pattern and $v_w$ be the corresponding vertex in $\scb_{\rho+\lam}$. Recall that the weight of $v_w$ is $w(\rho+\lam)$. The pattern $\BZL(v_w)$ contributes to $H_{\BZL}(p^{\boldsymbol{k}};p^{\boldsymbol{l}})$ where $\rho+\lam-w(\rho+\lam)=\sum k_i\al_i$. This is also the only pattern contributing to this term.  On the other hand, $w$ contributes to $H_{St}(p^{\boldsymbol{k}};p^{\boldsymbol{l}})$ with the same $\boldsymbol{k}$.

To calculate $H_{St}(p^{\boldsymbol{k}};p^{\boldsymbol{l}})$, we write $w=s_{i'_1}\cdots s_{i'_\ell}$ as in \eqref{eq:reduced of w}. Define
\[
\beta_\ell=\al_{i'_\ell}, \ \beta_{\ell-1}=s_{i'_\ell}(\al_{i'_{\ell-1}}), \ \cdots, \ \beta_1=s_{i'_\ell}s_{i'_{\ell-1}}\cdots s_{i'_2}(\al_{i'_1}).
\]
Then by Lemma \ref{lem:bump lemma},
\[
\Phi(w)=\{\beta_\ell, \ \cdots, \ \beta_1\}.
\]
Note that $\lVert \beta_j\rVert^2=2$ if and only if $\lVert \al_{i'_j}\rVert^2=2$. Thus the contribution of $w$ is
\[
H_{St}(p^{\boldsymbol{k}};p^{\boldsymbol{l}})=\prod_{j=1}^\ell g_{\lVert \beta_j\rVert^2} (p^{d_\lam(\beta_j)-1}, p^{d_\lam(\beta_j)}).
\]

We now calculate $H_{\BZL}(p^{\boldsymbol{k}};p^{\boldsymbol{l}})$. Write $\BZL(v_w)=(b_1,\cdots,b_N)$. Let $(b_1',\cdots,b'_\ell)$ be the nonzero entries in $\BZL(v_w)$. By \eqref{eq:bj value},
\[
(b'_1,\cdots, b'_\ell)=(d_{\lam}(\beta_1),\cdots, d_{\lam}(\beta_\ell)).
\]
 Note that the circled entries are zero. Thus
\[
H_{\BZL}(p^{\boldsymbol{k}};p^{\boldsymbol{l}})=G(v_w)=\prod_{b_j\in \BZL(v_w),b_j\neq 0}g_{\lVert \al_{i_j} \rVert^2}(p^{b_j-1},p^{b_j})=\prod_{j=1}^\ell g_{\lVert \al_{i'_j}\rVert^2}(p^{b'_j-1},p^{b'_j}),
\]
where the index $i_j$ is the one appearing in the nice decomposition of $w_0$. The proof is now complete.
\end{proof}

\begin{Rem}\label{rem:general description}
We now say a few words regarding the crystal graph description in general, at least in the stable range.

The proof here uses the following ingredients:
\begin{itemize}
  \item Some estimates of the unstable patterns, which help us eliminate such patterns.
  \item The natural bijection between $W$ and the set of stable patterns (both of them depends on a choice of nice decomposition of the longest element). The contribution is calculated using the first two formula in \eqref{eq:crystal contribution}.
\end{itemize}

In the stable range of the `correct' crystal graph description for other root systems admitting a good enumeration, one should take the corresponding formulas in \eqref{eq:crystal contribution}. The proof presented here works without essential change.
\end{Rem}

\appendix

\section{Examples}\label{app:examples}

In this appendix,  we work out $T_{\tau_\ell}$ (see Remark \ref{rem:graph}) and the bijection in Corollary \ref{cor:main}  for all the braidless weights.

\subsection{Type A}\label{sec:example type A}

Let $\fg=\fsl_{r+1}$. Any enumeration is a good enumeration for $\fg$. So we simply choose the following one:
\[
\begin{tikzpicture}
        \node (a1) at (-2,0) {$\al_1$};
        \node (a2) at (-1,0) {$\al_2$};
        \node (a3) at (0,0) {$\al_3$};
        \node (a4) at (1,0) {$\cdots$};
        \node (a5) at (2,0) {$\al_r$};
  \draw [-] (a1) to (a2);
  \draw [-] (a2) to (a3);
  \draw [-] (a3) to (a4);
  \draw [-] (a4) to (a5);
\end{tikzpicture}
\]

Let $\omega=\omega_1$.  Then
$\tau_\ell=s_1 s_{2}\cdots s_r\in {}^\omega W$. The graph $T_{\tau_\ell}$ is
\[
s_1\rightarrow s_{2} \rightarrow \cdots \rightarrow s_r.
\]
and
${}^\omega W=\{I,  \ s_1, \ s_1 s_{2}, \ \cdots, \ s_1 s_{2} \cdots s_r \}.$

If we pick a general $\omega=\omega_j$, then
\[
\tau_\ell=(s_js_{j-1}\cdots s_1)(s_{j+1} s_j\cdots s_2)\cdots (s_r s_{r-1}\cdots s_{r-j+1}).
\]
The graph $T_{\tau_\ell}$ is
\[
        \begin{tikzpicture}
        \node (00) at (0,0) {$s_j$};
        \node (01) at (1.5,0) {$s_{j+1}$};
        \node (02) at (3,0) {$\cdots$};
        \node (03) at (4.5,0) {$s_r$};

      \node (10) at (0,1) {$s_{j-1}$};
        \node (11) at (1.5,1) {$s_{j}$};
        \node (12) at (3,1) {$\cdots$};
        \node (13) at (4.5,1) {$s_{r-1}$};

        \node (20) at (0,2) {$\vdots$};
        \node (21) at (1.5,2) {$\vdots$};
                \node (22) at (3,2) {$\iddots$};

        \node (23) at (4.5,2) {$\vdots$};

        \node (30) at (0,3) {$s_1$};
        \node (31) at (1.5,3) {$s_{2}$};
        \node (32) at (3,3) {$\cdots$};
        \node (33) at (4.5,3) {$s_{r-j+1}$};

        \draw [->] (00) to (01);
        \draw [->] (01) to (02);
        \draw [->] (02) to (03);

        \draw [->] (10) to (11);
        \draw [->] (11) to (12);
        \draw [->] (12) to (13);

        \draw [->] (30) to (31);
        \draw [->] (31) to (32);
        \draw [->] (32) to (33);

        \draw [->] (00) to (10);
        \draw [->] (10) to (20);
        \draw [->] (20) to (30);

        \draw [->] (01) to (11);
        \draw [->] (11) to (21);
        \draw [->] (21) to (31);

        \draw [->] (03) to (13);
        \draw [->] (13) to (23);
        \draw [->] (23) to (33);
                        \end{tikzpicture}.
\]
Using Corollary \ref{cor:main}, it is easy to show that there is a bijection between
\[
\{
(i_j,\cdots, i_k,0,\cdots,0)\in\bz^{r-j+1}: j-1\leq k\leq r, \ 1\leq i_j<i_{j+1}<\cdots <i_k\leq k
\}
\]
and ${}^\omega W$. (When $k=j-1$, we only consider the element $(0,0,\cdots,0)\in\bz^{r-j+1}$.)
The map is given by
\[
(i_j,\cdots, i_k,0,\cdots,0)\mapsto (s_js_{j-1}\cdots s_{i_j})(s_{j+1} s_j\cdots s_{i_{j+1}})\cdots (s_k s_{k-1}\cdots s_{i_k}).
\]
(See also \cite{Cai18} Theorem 2.4 for a description in terms of Young tableaux.)

\subsection{Type B, C}\label{sec:example type BC}

For $\Phi=B_r$ or $C_r$, the good enumeration in \cite{Littelmann98} is given as follows:
\[
 \begin{tikzpicture}
        \node (a1) at (-2,0) {$\al_1$};
        \node (a2) at (-1,0) {$\al_2$};
        \node (a3) at (0,0) {$\al_3$};
        \node (a4) at (1,0) {$\cdots$};
        \node (a5) at (2,0) {$\al_r$};
\draw [-] (-1.65,0.04) to (-1.35,0.04);
\draw [-] (-1.65,-0.04) to (-1.35,-0.04);
  \draw [-] (a2) to (a3);
  \draw [-] (a3) to (a4);
  \draw [-] (a4) to (a5);
                        \end{tikzpicture}.
\]

If  $\omega=\omega_r$, then
\[
\tau_\ell=s_rs_{r-1}\cdots s_2s_1s_2 \cdots s_{r-1} s_r.
\]
The graph $T_{\tau_\ell}$ is
\[
s_r\to s_{r-1}\to \cdots \to s_2\to s_1\to s_2\to \cdots \to s_{r-1} \to s_r,
\]
and
${}^\omega W=\{I, \ s_r,\  s_rs_{r-1}, \ \cdots, \ s_rs_{r-1}\cdots s_1,\ s_rs_{r-1}\cdots s_1s_2, \ \cdots, \ \tau_\ell\}.$

If $\omega=\omega_1$, then
\[
\tau_\ell=(s_1s_2\cdots s_r)(s_1s_2\cdots s_{r-1})\cdots (s_1).
\]
The graph $T_{\tau_\ell}$ is
\[
        \begin{tikzpicture}
        \node (11) at (1,1) {$s_1$};
        \node (21) at (1,2) {$s_2$};
        \node (31) at (1,3) {$s_3$};
        \node (41) at (1,4) {$\vdots$};
        \node (51) at (1,5) {$s_r$};

        \node (22) at (2,2) {$s_1$};
        \node (32) at (2,3) {$s_2$};
     \node (42) at (2,4) {$\vdots$};
     \node (52) at (2,5) {$s_{r-1}$};

     \node (33) at (3,3) {$s_1$};
     \node (43) at (3,4) {$\iddots$};
     \node (53) at (3,5) {$\cdots$};

     \node (44) at (4,4) {$\iddots$};
     \node (54) at (4,5) {$s_2$};

     \node (55) at (5,5) {$s_1$};

        \draw [->] (11) to (21);
  \draw [->] (21) to (31);
  \draw [->] (31) to (41);
  \draw [->] (41) to (51);
  \draw [->] (44) to (54);

  \draw [->] (22) to (32);
  \draw [->] (32) to (42);
  \draw [->] (42) to (52);

  \draw [->] (21) to (22);
  \draw [->] (31) to (32);
  \draw [->] (32) to (33);
  \draw [->] (33) to (43);
  \draw [->] (51) to (52);
  \draw [->] (52) to (53);
\draw [->] (53) to (54);
\draw [->] (54) to (55);

\end{tikzpicture}
\]
There is a bijection
\[
\{
(i_1,\cdots, i_k,0,\cdots, 0)\in\bz^r: 0\leq k\leq r, \ r\geq i_1>\cdots >i_k>0
\}
\to  {}^\omega W,
\]
where the map is given by
\[
(i_1,\cdots, i_k,0,\cdots, 0)\mapsto (s_1s_2 \cdots s_{i_1})(s_1 s_2 \cdots s_{i_2}) \cdots (s_1 s_2\cdots s_{i_k}).
\]

\begin{Rem}
This parametrization is slightly different from the parametrization in \cite{GPSR} page 20. With our parametrization, one can show that
\[
\Phi(w)=\bigsqcup_{l=1}^k \Phi(w,l), 
\]
where $\Phi(w,l)$ is the set of roots corresponding to the first $i_l$ roots (counting from the bottom) in the $(r+l)$th column of $Sp_{2r}$. This parametrization may simplify the calculation in \cite{GPSR} Section 5.
\end{Rem}
\subsection{Type D}\label{sec:type d}

The enumeration in \cite{Littelmann98} is given as follows:
\[
 \begin{tikzpicture}
        \node (a1) at (-.707,0.707) {$\al_1$};
        \node (a2) at (-.707,-0.707) {$\al_2$};
        \node (a3) at (0,0) {$\al_3$};
        \node (a4) at (1,0) {$\cdots$};
        \node (a5) at (2,0) {$\al_r$};
  \draw [-] (a1) to (a3);
  \draw [-] (a2) to (a3);
  \draw [-] (a3) to (a4);
  \draw [-] (a4) to (a5);
                        \end{tikzpicture}
\]

If $\omega=\omega_r$, then
$\tau_\ell=s_rs_{r-1}\cdots s_3s_1s_2s_3 \cdots s_{r-1} s_r.$
The graph $T_{\tau_\ell}$ is
\[
        \begin{tikzpicture}
        \node (A-1) at (-4.2,0) {$s_{r}$};
        \node (A-2) at (-3,0) {$s_{r-1}$};
        \node (A) at (-1.707,0) {$\cdots$};
        \node (B) at (-0.707,0) {$s_3$};
        \node (C) at (0,0.707) {$s_1$};
        \node (D) at (0,-0.707) {$s_2$};
        \node (E) at (0.707,0) {$s_3$};
        \node (F) at (1.707,0) {$\cdots$};
        \node (F+1) at (3,0) {$s_{r-1}$};
        \node (F+2) at (4.2,0) {$s_{r}$};
                        \draw [->] (A) to (B);
  \draw [->] (B) to (C);
  \draw [->] (B) to (D);
  \draw [->] (C) to (E);
  \draw [->] (D) to (E);
  \draw [->] (E) to (F);
  \draw [->] (A-1) to (A-2);
  \draw [->] (A-2) to (A);
  \draw [->] (F) to (F+1);
  \draw [->] (F+1) to (F+2);
                        \end{tikzpicture},
\]
and
\[
\begin{aligned}
{}^\omega W=\{ & I, \ s_r, s_rs_{r-1}, \ \cdots, \ s_rs_{r-1}\cdots s_3s_1, \ s_rs_{r-1}\cdots s_3 s_2, \\
&s_rs_{r-1}\cdots s_3s_1s_2, \ s_rs_{r-1}\cdots s_3s_1s_2s_3, \ \cdots, \ \tau_\ell\}.
\end{aligned}
\]

If $\omega=\omega_1$ (the case $\omega=\omega_2$ is similar), then
\[
\tau_\ell=(s_1s_3s_4\cdots s_{r})(s_2s_3\cdots s_{r-1}) (s_1s_3\cdots s_{r-2})(s_1s_3\cdots s_{r-3})\cdots (s_1).
\]
The graph $T_{\tau_\ell}$ is
\[
        \begin{tikzpicture}
        \node (11) at (1,1) {$s_1$};
        \node (21) at (1,2) {$s_3$};
        \node (31) at (1,3) {$s_4$};
        \node (41) at (1,4) {$\vdots$};
        \node (51) at (1,5) {$s_r$};

        \node (22) at (2,2) {$s_2$};
        \node (32) at (2,3) {$s_3$};
     \node (42) at (2,4) {$\vdots$};
     \node (52) at (2,5) {$s_{r-1}$};

     \node (33) at (3,3) {$s_1$};
     \node (43) at (3,4) {$\iddots$};
     \node (53) at (3,5) {$\cdots$};

     \node (44) at (4,4) {$s_1$};
     \node (54) at (4,5) {$s_3$};

     \node (55) at (5,5) {$s_1$};

        \draw [->] (11) to (21);
  \draw [->] (21) to (31);
  \draw [->] (31) to (41);
  \draw [->] (41) to (51);
  \draw [->] (44) to (54);

  \draw [->] (22) to (32);
  \draw [->] (32) to (42);
  \draw [->] (42) to (52);

  \draw [->] (21) to (22);
  \draw [->] (31) to (32);
  \draw [->] (32) to (33);
   \draw [->] (33) to (43);
  \draw [->] (43) to (44);
  \draw [->] (51) to (52);
  \draw [->] (52) to (53);
\draw [->] (53) to (54);
\draw [->] (54) to (55);

                        \end{tikzpicture}.
\]
There is a bijection between
\[
\{
(i_1,\cdots, i_k,0,\cdots, 0)\in \bz^{r-1}: 0\leq k\leq r-1,r\geq i_1>\cdots >i_k>0, \ i_k\neq 2 \text{ if }k\neq 2, \ i_2\neq 1
\}\]
and  ${}^\omega W$,
where the map is given by
\[
(i_1,\cdots, i_k,0,\cdots, 0)\mapsto (s_1s_3 \cdots s_{i_1})(s_2 s_3 \cdots s_{i_2})(s_1s_3 \cdots s_{i_3}) \cdots (s_1 s_3\cdots s_{i_k}).
\]

The remark in the previous section applies in this case as well.

\subsection{Type E}

For $E_6$ and $E_7$, the good enumerations in \cite{Littelmann98} are given as follows:
\[
        \begin{tikzpicture}
        \node (a5) at (-2,0) {$\al_5$};
        \node (a4) at (-1,0) {$\al_4$};
        \node (a3) at (0,0) {$\al_3$};
        \node (a2) at (1,0) {$\al_2$};
        \node (a6) at (2,0) {$\al_6$};
        \node (a1) at (0,1) {$\al_1$};

                        \draw [-] (a5) to (a4);
  \draw [-] (a4) to (a3);
  \draw [-] (a3) to (a2);
  \draw [-] (a3) to (a1);
  \draw [-] (a2) to (a6);
                        \end{tikzpicture},
                        \qquad
                         \begin{tikzpicture}
        \node (a5) at (-2,0) {$\al_5$};
        \node (a4) at (-1,0) {$\al_4$};
        \node (a3) at (0,0) {$\al_3$};
        \node (a2) at (1,0) {$\al_2$};
        \node (a6) at (2,0) {$\al_6$};
        \node (a7) at (3,0) {$\al_7$};
        \node (a1) at (0,1) {$\al_1$};

                        \draw [-] (a5) to (a4);
                          \draw [-] (a4) to (a3);
                          \draw [-] (a3) to (a2);
                          \draw [-] (a3) to (a1);
                          \draw [-] (a2) to (a6);
                          \draw [-] (a7) to (a6);
                        \end{tikzpicture}.
\]

When $\Phi=E_6$, we choose $\omega=\omega_6$. When $\Phi=E_7$, we choose $\omega=\omega_7$.
The longest elements $\tau_\ell$ are
\[
\begin{aligned}
E_6: & \ s_6s_2s_3s_1s_4s_5s_3s_4s_2s_3s_1s_6s_2s_3s_4s_5\\
E_7: & \ s_7s_6s_2s_3s_1s_4s_5s_3s_4s_2s_3s_1s_6s_2s_3s_4s_5s_7s_6s_2s_3s_1s_4s_3s_2s_6s_7.
 \end{aligned}
\]
Here are the graphs:
\[
        \begin{tikzpicture}
        \node (0) at (0,3) {$E_6$};
        \node (1) at (0,0) {$s_6$};
       \node (2) at (1,0) {$s_2$};
       \node (3) at (2,0) {$s_3$};
       \node (4) at (2,1) {$s_1$};
       \node (5) at (3,0) {$s_4$};
       \node (6) at (4,0) {$s_5$};
       \node (7) at (3,1) {$s_3$};
       \node (8) at (4,1) {$s_4$};
       \node (9) at (3,2) {$s_2$};
        \node (10) at (4,2) {$s_3$};
       \node (11) at (5,2) {$s_1$};
       \node (12) at (3,3) {$s_6$};
       \node (13) at (4,3) {$s_2$};
       \node (14) at (5,3) {$s_3$};
       \node (15) at (6,3) {$s_4$};
       \node (16) at (7,3) {$s_5$};

                        \draw [->] (1) to (2);
  \draw [->] (2) to (3);
  \draw [->] (3) to (4);
  \draw [->] (3) to (5);
  \draw [->] (4) to (7);
      \draw [->] (5) to (6);
  \draw [->] (5) to (7);
  \draw [->] (6) to (8);
  \draw [->] (7) to (8);
  \draw [->] (7) to (9);
      \draw [->] (8) to (10);
  \draw [->] (9) to (10);
  \draw [->] (9) to (12);
  \draw [->] (10) to (11);
  \draw [->] (10) to (13);
      \draw [->] (11) to (14);
  \draw [->] (12) to (13);
  \draw [->] (13) to (14);
  \draw [->] (14) to (15);
  \draw [->] (15) to (16);

                        \end{tikzpicture},
                                \begin{tikzpicture}
                                \node (0) at (3,7) {$E_7$};
        \node (11) at (1,1) {$s_7$};
       \node (12) at (2,1) {$s_6$};
       \node (13) at (3,1) {$s_2$};
       \node (14) at (4,1) {$s_3$};
       \node (15) at (5,1) {$s_4$};
       \node (16) at (6,1) {$s_5$};

       \node (24) at (4,2) {$s_1$};
       \node (25) at (5,2) {$s_3$};
       \node (26) at (6,2) {$s_4$};

        \node (35) at (5,3) {$s_2$};
       \node (36) at (6,3) {$s_3$};
       \node (37) at (7,3) {$s_1$};

       \node (45) at (5,4) {$s_6$};
       \node (46) at (6,4) {$s_2$};
       \node (47) at (7,4) {$s_3$};
       \node (48) at (8,4) {$s_4$};
       \node (49) at (9,4) {$s_5$};

       \node (55) at (5,5) {$s_7$};
       \node (56) at (6,5) {$s_6$};
       \node (57) at (7,5) {$s_2$};
       \node (58) at (8,5) {$s_3$};
       \node (59) at (9,5) {$s_4$};

       \node (68) at (8,6) {$s_1$};
       \node (69) at (9,6) {$s_3$};

       \node (79) at (9,7) {$s_2$};
       \node (89) at (9,8) {$s_6$};
       \node (99) at (9,9) {$s_7$};

        \draw [->] (11) to (12);
        \draw [->] (12) to (13);
        \draw [->] (13) to (14);
        \draw [->] (14) to (15);
        \draw [->] (15) to (16);

        \draw [->] (24) to (25);
        \draw [->] (25) to (26);

        \draw [->] (35) to (36);
        \draw [->] (36) to (37);

        \draw [->] (45) to (46);
        \draw [->] (46) to (47);
        \draw [->] (47) to (48);
        \draw [->] (48) to (49);

        \draw [->] (55) to (56);
        \draw [->] (56) to (57);
        \draw [->] (57) to (58);
        \draw [->] (58) to (59);

        \draw [->] (68) to (69);

        \draw [->] (14) to (24);

        \draw [->] (15) to (25);
        \draw [->] (25) to (35);
        \draw [->] (35) to (45);
        \draw [->] (45) to (55);

        \draw [->] (16) to (26);
        \draw [->] (26) to (36);
        \draw [->] (36) to (46);
        \draw [->] (46) to (56);

        \draw [->] (37) to (47);
        \draw [->] (47) to (57);

        \draw [->] (48) to (58);
        \draw [->] (58) to (68);

        \draw [->] (49) to (59);
        \draw [->] (59) to (69);
        \draw [->] (69) to (79);
        \draw [->] (79) to (89);
        \draw [->] (89) to (99);

                        \end{tikzpicture}
\]
With a small calculation, one can write down all $27$ ($56$, resp.) minimal representatives.

\subsection{Type G}

This is relatively easier. We include it here for completeness.

Any enumeration is a good enumeration for $G_2$. Let $\Delta=\{\al_1,\al_2\}$ be a set of simple roots for $G_2$, where $\al_2$ is the long root. If $\omega=\omega_1$, then  the longest element is $\tau_\ell=s_1s_2s_1s_2s_1$ and the graph $T_{\tau_\ell}$ is
\[
s_1\to s_2\to s_1\to s_2\to s_1.
\]
If $\omega=\omega_2$, then the longest element is $\tau_\ell=s_2s_1s_2s_1s_2$ and the graph $T_{\tau_{\ell}}$ is
\[
 s_2\to s_1\to s_2\to s_1\to s_2.
\]



\bibliographystyle{alpha-abbrvsort}



\end{document}